\newcommand{\eps}{\varepsilon}
\newcommand{\supCN}{\mbox{$\sup {\rm CN}$}}
\newcommand{\supcCN}{\mbox{$\sup {\rm cCN}$}}
\newcommand{\suptwocCN}{\mbox{$\sup {\rm mcCN}$}}
\newcommand{\supC}{\mbox{$\sup {\rm C}$}}
\newtheorem{theorem}{Theorem}
\newtheorem{problem}[theorem]{Problem}
\newtheorem{lem}[theorem]{Lemma}
\newtheorem{definition}[theorem]{Definition}
\title{\LARGE \bf Combined Top-down and Bottom-up Approach\\ to Multilevel Supervisory Control}
\author{Jan Komenda, Tom{\' a}{\v s} Masopust, and Jan H. van Schuppen
  \thanks{J. Komenda is with the Institute of Mathematics,
          Academy of Sciences of the Czech Republic,
          {\v Z}i{\v z}kova 22, 616 62 Brno, Czech Republic.
          T. Masopust is with TU Dresden, Germany, and with the Institute of Mathematics, Academy of Sciences of the Czech Republic.
          J. H. van Schuppen is with
          Van Schuppen Control Research,
          Gouden Leeuw 143, 1103 KB, Amsterdam, The Netherlands.
          {\tt\small komenda@math.cas.cz, masopust@ipm.cz, jan.h.van.schuppen@xs4all.nl}
  }%
}
\begin{document}

\maketitle
\thispagestyle{empty}
\pagestyle{empty} 

\begin{abstract}
  Recently, we have proposed two complementary approaches, top-down and bottom-up, to multilevel supervisory control of discrete-event systems. In this paper, we compare and combine these approaches. The combined approach has strong  features of both approaches, namely, a lower complexity of the top-down approach with the generality of the bottom-up approach. We show that, for prefix-closed languages, a posteriori supervisors computed in the bottom-up manner do not alter maximal permissiveness within the three-level coordination control architecture, that is, the supremal three-level conditionally-controllable and conditionally-normal language can always be computed in a distributed way using multilevel coordination. Moreover, a general polynomial-time procedure for non-prefix closed case is proposed  based on coordinators for nonblockingness and a posteriori supervisors.
\end{abstract}

\section{Introduction}

  Discrete-event abstractions of complex engineering systems have often a modular structure and
  typically consist of either a large Petri nets or a network (synchronous product) of finite automata. 
  Supervisory control theory was introduced to provide a formal guarantee of
  safety and nonblockingness for these systems. Modular and decentralized supervisory control
  theories are especially relevant for large scale systems and these are often combined with
  hierarchical control based on abstractions. 
  Coordination control of distributed systems with synchronous communication was
  developed by the authors, 
  see~\cite{JDEDS} and the references therein,
  in which a coordinator restricts 
  the behavior of two or more subsystems 
  so that, after further control synthesis,
  safety and nonblockingness of the distributed system are achieved.
  
  In order to further decrease the complexity of control synthesis,
  a multilevel coordination control framework was proposed in~\cite{CDC14}, where a single (central) coordinator at the top level
  of the standard (three-level) coordination control was replaced
  by group supervisors for different group systems at the lowest level.
  These coordinators together with their supervisors then form the middle
  (intermediate) level, while a (single) high-level coordinator is
  at the top level of the three-level coordination control.
  This architecture considerably limits the computational complexity 
  due to relatively small event sets at the various levels.
 
  Recently, we proposed two complementary approaches, called top-down \cite{CDC14} and bottom-up \cite{ECC14},  to multilevel supervisory control of  discrete-event systems. 
  We have developed constructive results in the top-down approach of~\cite{CDC14}, 
  where it was shown under which conditions
  the maximally permissive solution for the three-level coordination control architecture exists,
  that is, the supremal three-level conditionally controllable languages.

  In this paper, we propose a combined approach, which can be described as a top-down design followed by
  a bottom-up computation. The combined approach combines the strong features,
  namely, the lower complexity of the top-down approach
  with the generality of the bottom-up approach. More specifically,
  we propose to complete the top-down design of coordinators from the high-level
  to the bottom-level by computing a posteriori supervisors on these coordinator alphabets
  in the opposite direction, i.e., in the bottom-up manner.
  The role of these supervisors is to enforce the sufficient conditions
  for distributed computation presented in~\cite{ACC15},
  which are formulated as controllability and normality on all coordinator alphabets.
  Note that unlike the bottom-up approach of~\cite{ECC14}, we do not
  need to compute supervisors at the higher level, but only supervisors for 
  individual subsystems at the lowest level are computed.

  Moreover, we show that for prefix-closed languages a posteriori supervisors do not
  alter maximal permissiveness within the three-level coordination control architecture, i.e.,
  the supremal three-level conditionally-controllable and
  conditionally-normal languages can always be computed in
  the distributed way. In the general case of non-prefix-closed languages,
  we propose to compute coordinators for nonblockingness in the bottom-up manner
  in addition to the a posteriori supervisors.

  This paper has the following structure.
  In Section~\ref{sec:prel}, we recall the basic elements of
  supervisory control theory together with 
  basic (three-level) coordination control framework.
  In Section~\ref{sec:sup2cc}, multilevel coordination control framework is 
  discussed and the strong points and drawbacks of the two existing approaches are compared.
  The main results of the paper are presented in Sections~\ref{sec:combined} and~\ref{sec:general}. 
  In the former section, it is proven that in the combined approach based on a posteriori supervisors, the supremal three-level conditionally-controllable and conditionally-normal languages can always be computed in a distributed way. Then, in the latter section concerned with the non-prefix-closed case, a formal general procedure is presented, where a posteriori supervisors are combined with coordinators for nonblockingness.

\section{Preliminaries}
\label{sec:prel}

This section recalls the basic results about coordination control of  partially observed DES
with a single (centralized) coordinator.
First, elementary notions and notation  of supervisory control theory are recalled. The reader is referred to~\cite{CL08} for more details. 

Let $A$ be a finite nonempty set of {\em events}, and let $A^*$ denote the set of all finite words over $A$. The {\em empty word\/} is denoted by $\eps$. 

  A {\em generator\/} is a quintuple $G=(Q,A, f, q_0, Q_m)$, where $Q$ is the finite nonempty set of {\em states}, $A$ is the {\em event set\/}, $f: Q \times A \to Q$ is the {\em partial transition function}, $q_0 \in Q$ is the {\em initial state}, and $Q_m\subseteq Q$ is the set of {\em marked states}. In the usual way, the transition function $f$ can be extended to the domain $Q \times A^*$ by induction. The behavior of $G$ is described in terms of languages. The language {\em generated\/} by $G$ is the set $L(G) = \{s\in A^* \mid f(q_0,s)\in Q\}$ and the language {\em marked\/} by $G$ is the set $L_m(G) = \{s\in A^* \mid f(q_0,s)\in Q_m\}\subseteq L(G)$.

  A {\em (regular) language\/} $L$ over an event set $A$ is a set $L\subseteq A^*$ such that there exists a generator $G$ with $L_m(G)=L$. The prefix closure of $L$ is the set $\overline{L}=\{w\in A^* \mid \text{there exists } u \in A^* \text{ such that } wu\in L\}$; $L$ is {\em prefix-closed\/} if $L=\overline{L}$. 

  A {\em (natural) projection} $P: A^* \to A_o^*$, for some $A_o\subseteq A$, is a homomorphism defined so that $P(a)=\eps$, for $a\in A\setminus A_o$, and $P(a)=a$, for $a\in A_o$. The {\em inverse image} of $P$, denoted by $P^{-1} : A_o^* \to 2^{A^*}$, is defined as $P^{-1}(s)=\{w\in A^* \mid P(w) = s\}$. The definitions can naturally be extended to languages. The projection of a generator $G$ is a generator $P(G)$ whose behavior satisfies $L(P(G))=P(L(G))$ and $L_m(P(G))=P(L_m(G))$.

  A {\em controlled generator with partial observations\/} is a structure $(G,A_c,P,\Gamma)$, where $G$ is a generator over $A$, $A_c \subseteq A$ is the set of {\em controllable events}, $A_{u} = A \setminus A_c$ is the set of {\em uncontrollable events}, $P:A^*\to A_o^*$ is the projection, and $\Gamma = \{\gamma \subseteq A \mid A_{u}\subseteq\gamma\}$ is the {\em set of control patterns}. 
  
  A {\em supervisor\/} for the controlled generator $(G,A_c,P,\Gamma)$ is a map $S:P(L(G)) \to \Gamma$. 
  
  A {\em closed-loop system\/} associated with the controlled generator $(G,A_c,P,\Gamma)$ and the supervisor $S$ is defined as the smallest language $L(S/G) \subseteq A^*$ such that 
  \begin{enumerate}
    \item $\eps \in L(S/G)$ and 
    \item if $s \in L(S/G)$, $sa\in L(G)$, and $a \in S(P(s))$, then also $sa \in L(S/G)$.
  \end{enumerate}
  The marked behavior of the closed-loop system is defined as $L_m(S/G)=L(S/G)\cap L_m(G)$.

  Let $G$ be a generator over $A$, and let $K\subseteq L_m(G)$ be a specification.  The aim of supervisory control theory is to find a nonblocking 
  supervisor $S$ such that $L_m(S/G)=K$. 
   The nonblockingness means that $\overline{L_m(S/G)} = L(S/G)$, hence $L(S/G)=\overline{K}$. 
  It is known that such a supervisor exists if and only if $K$ is 
  \begin{enumerate}
    \item {\em controllable\/} with respect to $L(G)$ and $A_u$;\\ 
      that is, $K A_u\cap L\subseteq K$, and
    \item {\em observable\/} with respect to $L(G)$, $A_o$, and $A_c$;\\ 
      that is, for all $s\in K$ and $\sigma \in A_c$, if $s\sigma \notin K$ and $s\sigma \in L(G)$, then $P^{-1}[P(s)]\sigma \cap K = \emptyset$, where $P:A^*\to A_o^*$.
  \end{enumerate}

  The synchronous product (parallel composition) of languages $L_1\subseteq A_1^*$ and $L_2\subseteq A_2^*$ is defined by 
  \[
    L_1\parallel L_2=P_1^{-1}(L_1) \cap P_2^{-1}(L_2) \subseteq A^*\,, 
  \]
  where $P_i: A^*\to A_i^*$, for $i=1,2$, are projections to local event sets. In terms of generators, it is known that $L(G_1 \| G_2) = L(G_1) \parallel L(G_2)$ and $L_m(G_1 \| G_2)= L_m(G_1) \parallel L_m(G_2)$, see~\cite{CL08} for more details.
  
  We need the following lemma, which should be obvious.
  \begin{lem} 
  \label{obvious}
    For any language $L\subseteq A^*$ and projections $P_1: A^* \to B_1^*$ and $P_2: A^* \to B_2^*$ with $B_2\subseteq B_1\subseteq A$, it holds that $P_1(L) \parallel P_2(L)= P_1(L)$. \hfill\QED
  \end{lem} 

  Let $G$ be a generator over $A$, and let $Q:A^* \to A_o^*$ be a natural projection. A language $K\subseteq L(G)$ is {\em normal\/} with respect to $L(G)$ and $Q$ if $\overline{K} = Q^{-1}Q(K) \cap L(G)$.

  Recall that controllability is preserved by the synchronous product. It is easy to show that the same holds for normality.
  \begin{lem}\label{feng}
    For $i=1,2,\dots ,n$, let $K_i\subseteq L_i$ be controllable with respect to $L_i\subseteq A_i^*$ and $A_{i,u}$, nonconflicting, and normal with respect to $L_i$ and $Q_{i}$, where
    $Q_i: A_i^*\to A_{i,o}^*$ are natural projections that define partial observations in subsystems.
    Then $\parallel_{i=1}^n K_i$ is controllable with respect to $\parallel_{i=1}^n L_i$ and $\cup_{i=1}^n A_{i,u}$ and normal with respect to $\parallel_{i=1}^n L_i$ and $Q$, where
    $Q: (\cup_{i=1}^n A_i)^* \to (\cup_{i=1}^n A_{i,o})^*$ is the natural projection that describes partial observations over the global alphabet. \hfill\QED
  \end{lem}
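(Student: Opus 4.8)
The plan is to reduce everything to the binary case $n=2$ by a routine induction on $n$: at step $k$ one composes $\parallel_{i=1}^{k-1}K_i$ with $K_k$, the induction hypothesis supplying controllability and normality of the partial composition $\parallel_{i=1}^{k-1}K_i$ with respect to $\parallel_{i=1}^{k-1}L_i$, while the mutual nonconflictingness assumption supplies that $\parallel_{i=1}^{k-1}K_i$ and $K_k$ form a nonconflicting pair, so that the binary case applies. Controllability of $K_1\parallel K_2$ with respect to $L_1\parallel L_2$ and $A_{1,u}\cup A_{2,u}$ is exactly the already-recalled fact that controllability is preserved by the synchronous product of nonconflicting languages; hence the whole content of the lemma lies in the normality claim.

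For normality I would first fix notation: write $A=A_1\cup A_2$, $A_o=A_{1,o}\cup A_{2,o}$, let $P_i\colon A^*\to A_i^*$ and $R_i\colon A_o^*\to A_{i,o}^*$ be the induced projections, and record the elementary identity $R_i\circ Q=Q_i\circ P_i$ on $A^*$, checked event by event. By nonconflictingness, $\overline{K_1\parallel K_2}=\overline{K_1}\parallel\overline{K_2}=P_1^{-1}(\overline{K_1})\cap P_2^{-1}(\overline{K_2})$; and since the plant languages $L_i=L(G_i)$ are prefix-closed, $L_1\parallel L_2$ is prefix-closed and contains $\overline{K_1}\parallel\overline{K_2}$. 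The inclusion $\overline{K_1\parallel K_2}\subseteq Q^{-1}Q(\overline{K_1\parallel K_2})\cap(L_1\parallel L_2)$ is then immediate.

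The substantive step is the reverse inclusion. Take $w\in Q^{-1}Q(\overline{K_1\parallel K_2})\cap(L_1\parallel L_2)$, so that $P_i(w)\in L_i$ for $i=1,2$ and $Q(w)=Q(v)$ for some $v\in\overline{K_1\parallel K_2}=\overline{K_1}\parallel\overline{K_2}$, whence $P_i(v)\in\overline{K_i}$. Applying $R_i$ to $Q(w)=Q(v)$ and using $R_i\circ Q=Q_i\circ P_i$ yields $Q_i(P_i(w))=Q_i(P_i(v))\in Q_i(\overline{K_i})$, so $P_i(w)\in Q_i^{-1}Q_i(\overline{K_i})\cap L_i=\overline{K_i}$ by normality of $K_i$. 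Since this holds for both $i$, we get $w\in P_1^{-1}(\overline{K_1})\cap P_2^{-1}(\overline{K_2})=\overline{K_1\parallel K_2}$, as required; the global projection in the statement is $Q\colon(\cup_i A_i)^*\to(\cup_i A_{i,o})^*$, matching the $n$-fold version of the argument.

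I expect the only real friction to be bookkeeping rather than mathematics: making sure the inductive step is legitimate, i.e. that ``mutual nonconflictingness'' is precisely what licenses treating $\parallel_{i=1}^{k-1}K_i$ and $K_k$ as a nonconflicting pair together with the compatibility of the induced projections on the growing alphabet; and checking that the formulation of normality via $Q(K)$ rather than $Q(\overline{K})$ is harmless here, since the two coincide in the direction actually used. Everything else — the verification of $R_i\circ Q=Q_i\circ P_i$ and the manipulations of inverse projections and prefix closures — is standard.
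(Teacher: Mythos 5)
Your argument is sound in substance, and there is nothing in the paper to compare it against: the lemma is stated with a \QED{} and dismissed by the remark that ``it is easy to show'' that normality, like controllability, is preserved by synchronous product. Your direct computation --- establish $R_i\circ Q=Q_i\circ P_i$, take $w\in Q^{-1}Q(\overline{\parallel_i K_i})\cap(\parallel_i L_i)$, push it componentwise into $Q_i^{-1}Q_i(\overline{K_i})\cap L_i=\overline{K_i}$, and reassemble via nonconflictingness --- is exactly the standard proof, and the controllability half is, as you say, the recalled classical fact.

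Two repairs are worth making. First, the induction scaffolding is the one place your plan could actually fail: nonconflictingness of the whole family $K_1,\dots,K_n$ does not pass to subfamilies (take $K_1,K_2$ conflicting and $K_3=\emptyset$: the triple is vacuously nonconflicting since both sides are empty, while the pair $K_1,K_2$ is not), so the induction hypothesis at stage $k<n$ is not available, and neither is the pair-nonconflictingness of $\parallel_{i<k}K_i$ with $K_k$ at intermediate stages. But the induction is also unnecessary: your displayed two-component argument works verbatim for all $n$ components simultaneously, using only the single identity $\overline{\parallel_{i=1}^n K_i}=\parallel_{i=1}^n\overline{K_i}$, and the same is true of the controllability half; state it for general $n$ and drop the reduction to $n=2$. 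Second, you are right to be suspicious of the printed definition of normality: as written, $\overline{K}=Q^{-1}Q(K)\cap L(G)$ forces $K=\overline{K}$ under full observation and so must be read as the standard $\overline{K}=Q^{-1}Q(\overline{K})\cap L(G)$, which is the form your proof actually invokes; with that reading the direction you use is precisely the definition. A last small simplification: prefix-closedness of the $L_i$ is never needed, since normality already gives $\overline{K_i}\subseteq L_i$ and hence $\overline{\parallel_i K_i}\subseteq\parallel_i\overline{K_i}\subseteq\parallel_i L_i$.
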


 Transitivity of controllability and normality is needed later.
  \begin{lem}[\cite{CDC14}]\label{lem_transC}
    Let $K\subseteq L \subseteq M$ be languages over $A$ such that $K$ is controllable with respect to $L$ and $A_u$ and normal with respect to $L$ and $Q$, and $L$ is controllable with respect to $M$ and $A_u$ and normal with respect to $M$ and $Q$. Then $K$ is controllable with respect to $M$ and $A_u$ and normal with respect to $M$ and $Q$.
    \hfill\QED
  \end{lem}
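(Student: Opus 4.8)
The plan is to prove the two claims (controllability transitivity and normality transitivity) separately, since they are logically independent, and then simply combine them since the hypotheses and conclusion are the conjunction.

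For the controllability part, I would argue directly from the definition. We assume $KA_u \cap L \subseteq K$ and $LA_u \cap M \subseteq L$, and must show $KA_u \cap M \subseteq K$. Take $s\sigma \in KA_u \cap M$ with $s \in \overline{K}$ and $\sigma \in A_u$ (being careful about whether the definition uses $K$ or $\overline{K}$ on the left — the paper writes $KA_u \cap L \subseteq K$ for prefix-closed-style controllability, so I would work with $\overline{K}$ consistently, noting $\overline{K}\subseteq\overline{L}\subseteq\overline{M}$). Since $s \in \overline{K} \subseteq \overline{L}$ and $s\sigma \in M$ (indeed $s\sigma \in \overline M$), controllability of $L$ with respect to $M$ gives $s\sigma \in \overline L$. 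Now $s\sigma \in \overline L$ and $s\in\overline K$, so controllability of $K$ with respect to $L$ gives $s\sigma \in \overline K$. This is the easy direction and should go through in two lines.

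For the normality part, the hypotheses are $\overline{K} = Q^{-1}Q(\overline K) \cap L$ and $\overline{L} = Q^{-1}Q(\overline L) \cap M$, and the goal is $\overline{K} = Q^{-1}Q(\overline K) \cap M$. One inclusion, $\overline K \subseteq Q^{-1}Q(\overline K)\cap M$, is trivial since $\overline K \subseteq \overline L \subseteq \overline M \subseteq M$ and $\overline K \subseteq Q^{-1}Q(\overline K)$ always. For the reverse inclusion, take $w \in Q^{-1}Q(\overline K) \cap M$. Since $\overline K \subseteq \overline L$, monotonicity of $Q^{-1}Q$ gives $Q^{-1}Q(\overline K) \subseteq Q^{-1}Q(\overline L)$, so $w \in Q^{-1}Q(\overline L) \cap M = \overline L$ by normality of $L$ in $M$. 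Thus $w \in Q^{-1}Q(\overline K) \cap \overline L \subseteq Q^{-1}Q(\overline K) \cap L = \overline K$ by normality of $K$ in $L$ (here I would note $\overline L \subseteq L$ so the intersection with $\overline L$ is contained in the intersection with $L$). That completes the proof.

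I do not anticipate a genuine obstacle here — the statement is a routine "transitivity" composition and the proof is a short diagram chase through the definitions. The only point demanding a little care is bookkeeping about prefix closures: normality is defined in the excerpt via $\overline K = Q^{-1}Q(K)\cap L$ (with prefix closures appearing asymmetrically), and controllability as stated uses $K$ rather than $\overline K$ on the left-hand side, so I would make sure at the outset to restate both definitions in the exact form used, and to keep track of which objects are prefix-closed (the inclusion $\overline K \subseteq L$, for instance, follows from $\overline K \subseteq \overline L \subseteq L$). Once that is pinned down, each implication is a single application of a hypothesis plus monotonicity of $Q^{-1}Q$ and of prefix closure.
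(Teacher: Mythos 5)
The paper itself gives no proof of this lemma: it is imported from the cited reference \cite{CDC14} and stated with a QED box, so there is nothing in-paper to compare against. Your argument is the standard transitivity chase and is essentially correct, but the one point you should make explicit rather than leave as ``bookkeeping'' is that two of your steps are false for arbitrary languages: $\overline{M}\subseteq M$ (in your trivial inclusion for normality) and $\overline{L}\subseteq L$ (when you pass from $w\in Q^{-1}Q(\overline{K})\cap\overline{L}$ to an application of normality of $K$ in $L$). These hold only because $L$ and $M$ play the role of plant languages, which in this paper are generated languages $L(G)$ and hence prefix-closed (and Section~\ref{sec:combined}, where the lemma is applied, assumes all languages prefix-closed anyway). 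The same remark applies to the controllability half: you derive $s\sigma\in\overline{L}$ from controllability of $L$ with respect to $M$, but the hypothesis $\overline{K}A_u\cap L\subseteq\overline{K}$ needs $s\sigma\in L$, so again prefix-closedness of $L$ is what closes that step. With this hypothesis stated once at the outset (equivalently, reading the middle and outer languages as $\overline{L}$ and $\overline{M}$ throughout), every step you give is a single application of a hypothesis plus monotonicity of $Q^{-1}Q$, and the proof is complete. Note also that for the easy normality inclusion you do not need prefix-closedness of $M$ at all: normality of $K$ with respect to $L$ already gives $\overline{K}=Q^{-1}Q(K)\cap L\subseteq L\subseteq M$.
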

  
  Now we recall the basic notions of coordination control~\cite{JDEDS}. 
  A language $K$ over $\cup_{i=1}^{n}A_i$ is {\em conditionally decomposable with respect to alphabets $(A_i)_{i=1}^{n}$ and $A_k$}, where $\cup_{1\le i,j\le n}^{i\neq j} (A_i\cap A_j) \subseteq A_k\subseteq \cup_{i=1}^{n} A_j$, if 
  \[
    K =\ \parallel_{i=1}^{n} P_{i+k} (K)\,,
  \]
  for projections $P_{i+k}$ from $\cup_{j=1}^{n} A_j$ to $A_i\cup A_k$, for $i=1,2,\ldots,n$.
  The alphabet $A_k$ is a coordinator alphabet and  includes all shared events:
  \[
    A_{sh}=\cup_{1\le i,j\le n}^{i\neq j} (A_i\cap A_j) \subseteq A_k\,. 
  \]
  
  This has the following well-known impact.
  \begin{lem}[\cite{FLT}]\label{lemma:Wonham}
    Let $P_k : A^*\to A_k^*$ be a projection, and let $L_i$ be a language over $A_i$, for $i=1,2,\dots ,n$, and let $A_{sh} \subseteq A_k$. Then $P_k(\parallel_{i=1}^n L_i)=\parallel_{i=1}^n P_k(L_i)$.
    \hfill\QED
  \end{lem}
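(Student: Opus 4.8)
The plan is to prove the two inclusions separately, observing that the hypothesis $A_{sh}\subseteq A_k$ is needed only for one of them. Throughout, write $A=\cup_{i=1}^n A_i$, let $P_i\colon A^*\to A_i^*$ denote the projections onto the local alphabets, and, for each $i$, let $Q_i\colon A_k^*\to(A_i\cap A_k)^*$ be the projection; note that $P_k(L_i)\subseteq(A_i\cap A_k)^*$ because $L_i\subseteq A_i^*$, and that $Q_i\circ P_k$ coincides with the projection $A^*\to(A_i\cap A_k)^*$, which in turn equals $P_k$ (restricted to $A_i^*$) precomposed with $P_i$.

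First I would dispose of the inclusion $P_k(\parallel_{i=1}^n L_i)\subseteq\parallel_{i=1}^n P_k(L_i)$, which holds without any assumption on $A_{sh}$: given $w\in\parallel_{i=1}^n L_i$ we have $P_i(w)\in L_i$ for every $i$, hence $Q_i(P_k(w))=P_k(P_i(w))\in P_k(L_i)$ for every $i$, i.e.\ $P_k(w)\in\parallel_{i=1}^n P_k(L_i)$.

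For the reverse inclusion I would first reduce to the case $n=2$ by associativity of $\parallel$ and induction on $n$: putting $L'=\parallel_{i=1}^{n-1}L_i$ over $A'=\cup_{i=1}^{n-1}A_i$, one has $A'\cap A_n=\cup_{i=1}^{n-1}(A_i\cap A_n)\subseteq A_{sh}\subseteq A_k$, so the two-language statement applies to $L'\parallel L_n$, while the induction hypothesis gives $P_k(L')=\parallel_{i=1}^{n-1}P_k(L_i)$. It then remains to show that $A_1\cap A_2\subseteq A_k$ implies $P_k(L_1)\parallel P_k(L_2)\subseteq P_k(L_1\parallel L_2)$. The decisive observation, and the only point where $A_1\cap A_2\subseteq A_k$ is used, is that an event of $A_1\setminus A_k$ belongs to no other alphabet, so $(A_2\setminus A_k)\cap A_1=\emptyset$ and symmetrically. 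Given $t\in P_k(L_1)\parallel P_k(L_2)$, I would choose $w_1\in L_1$ and $w_2\in L_2$ with $P_k(w_j)=Q_j(t)$, factor each $w_j$ into its maximal private blocks (lying in $(A_j\setminus A_k)^*$) separated by the shared letters that constitute $Q_j(t)$, and then build $w$ by threading the common skeleton $t$ and inserting, in the gaps prescribed by $t$, the private blocks of $w_1$ followed by those of $w_2$. One then verifies that $P_1(w)=w_1$ and $P_2(w)=w_2$---the private blocks contributed by the other subsystem are invisible to $P_j$ by the disjointness noted above, and the shared letters retained by $P_j$ reassemble exactly into $w_j$---and that $P_k(w)=t$, since only the letters of $t$ survive $P_k$.

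The step I expect to be the main obstacle is the bookkeeping in this interleaving construction: making precise how the common subword $t$ is threaded simultaneously through $w_1$ and $w_2$, and verifying $P_j(w)=w_j$ for both $j$ at once. The genuine content is confined to the disjointness of the private alphabets, which is exactly where $A_{sh}\subseteq A_k$ enters; the reduction on $n$ and the symbol-by-symbol verifications are routine.
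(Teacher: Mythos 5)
The paper gives no proof of this lemma---it is quoted from Feng's thesis \cite{FLT} with the \QED{} attached to the statement---so there is nothing to compare against; judged on its own, your argument is correct and is the standard one: the easy inclusion $P_k(\parallel_i L_i)\subseteq\ \parallel_i P_k(L_i)$ from commutation of nested projections, then induction to $n=2$ and an interleaving construction for the converse, with $A_{sh}\subseteq A_k$ entering exactly through the disjointness $(A_1\setminus A_k)\cap A_2=\emptyset$. The only point worth making explicit in a write-up is the one you already use implicitly, namely that the right-hand product is taken with $P_k(L_i)$ regarded as a language over $A_i\cap A_k$ (not over all of $A_k$, which would turn $\parallel$ into plain intersection and falsify the statement).
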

  
  The problem of coordination control synthesis is now recalled.
  \begin{problem}\label{problem:controlsynthesis}
    Let  $G_i$, for $i=1,2,\dots ,n$, be local generators over the event sets $A_i$ of a modular plant $G=\parallel_{i=1}^n G_i$, and let $G_k$ be a coordinator over an alphabet $A_k$. Let $K\subseteq L(G \| G_k)$ be a specification language. Assume that $A_k\supseteq A_{sh}$ and that $K$ is conditionally decomposable with respect to event sets $(A_i)_{i=1}^{n}$ and $A_k$.

    The overall task $K$ is divided into the local subtasks and the coordinator subtask, cf.~\cite{KvS08}. The supervisor $S_k$ for the coordinator will guarantee that $L(S_k/G_k)\subseteq P_k(K)$. Similarly, the supervisors $S_i$ will guarantee that $L(S_i/ [G_i \| (S_k/G_k) ])\subseteq P_{i+k}(K)$, for $i=1,2,\dots ,n$.

    The problem is to determine the supervisors $S_1, S_2, \dots, S_n$, and $S_k$ such that
    \begin{flalign*}
      && \parallel_{i=1}^n L_m(S_i/ [G_i \| (S_k/G_k) ]) = K\,. && \hfill\triangleleft
    \end{flalign*}
  \end{problem}
  
  \medskip
  The main existential result for a prefix-closed specification $K$ is the special case
of Theorem 13 of \cite{KomendaMS14a} extended to  general $n\geq 2$.
  \begin{theorem}\cite{KomendaMS14a}\label{thm1}
    Consider the setting of Problem~\ref{problem:controlsynthesis}. There exist 
    supervisors $S_1, S_2,\ldots, S_n$ and $S_k$ based on partial observations such that
    \begin{equation}\tag{1}\label{eq:controlsynthesissafety}
        \parallel_{i=1}^n L(S_i/ [G_i \| (S_k/G_k) ]) = K
    \end{equation}
    if and only if $K$ is
    \begin{enumerate}
      \item conditionally controllable with respect to the generators $G_i$ and $G_k$ and the uncontrollable sets $A_{i,u}$ and $A_{k,u}$, for $i=1,2,\ldots,n$, and
      \item conditionally observable with respect to the generators $G_i$ andd $G_k$, the event sets $A_{i,c}$ and $A_{k,c}$, and the projections $Q_{i+k}$ and $Q_{k}$ from $A_i^*$ to $A_{i,o}^*$, for $i=1,2,\ldots,n$. \hfill\QED
    \end{enumerate}
  \end{theorem}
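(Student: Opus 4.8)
The plan is to reduce the multi-supervisor equivalence to the single-supervisor synthesis result recalled above (controllability together with observability): apply it once on the coordinator alphabet $A_k$ to obtain $S_k$, once on each alphabet $A_i\cup A_k$ to obtain $S_i$, and then glue the resulting closed loops by conditional decomposability. Throughout I use that $K$ is prefix-closed, so that $\overline{P_k(K)}=P_k(K)$ and $\overline{P_{i+k}(K)}=P_{i+k}(K)$, and that projections and synchronous products of prefix-closed languages are again prefix-closed, so that no nonconflictingness assumption intervenes.

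For the direction from conditional controllability and conditional observability to \eqref{eq:controlsynthesissafety}: by hypothesis $P_k(K)$ is controllable and observable with respect to $L(G_k)$, $A_{k,u}$, $A_{k,c}$ and $Q_k$, so the synthesis result yields a partial-observation supervisor $S_k$ with $L(S_k/G_k)=\overline{P_k(K)}=P_k(K)$. Consequently $L(G_i\|(S_k/G_k))=L(G_i)\parallel L(S_k/G_k)=L(G_i)\parallel\overline{P_k(K)}$, which is exactly the reference language appearing in the definition of conditional controllability and conditional observability of $P_{i+k}(K)$; hence, again by the synthesis result, there is a supervisor $S_i$ with $L(S_i/[G_i\|(S_k/G_k)])=\overline{P_{i+k}(K)}=P_{i+k}(K)$ for each $i$. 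Finally, conditional decomposability gives $\parallel_{i=1}^n L(S_i/[G_i\|(S_k/G_k)])=\parallel_{i=1}^n P_{i+k}(K)=K$, which is \eqref{eq:controlsynthesissafety}.

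For the converse, assume supervisors $S_1,\dots,S_n,S_k$ as in the setting of Problem~\ref{problem:controlsynthesis} satisfy \eqref{eq:controlsynthesissafety}, and set $M_i=L(S_i/[G_i\|(S_k/G_k)])\subseteq P_{i+k}(K)$. The key step is to upgrade these inclusions to equalities and, likewise, to show $L(S_k/G_k)=P_k(K)$. Projecting $\parallel_{j=1}^n M_j=K$ by $P_{i+k}$ and invoking Lemma~\ref{lemma:Wonham} — applicable because any event shared by two of the languages $M_j$ (whose alphabets are $A_j\cup A_k$) lies in $A_k\subseteq A_i\cup A_k$, since $A_i\cap A_j\subseteq A_{sh}\subseteq A_k$ — gives $P_{i+k}(K)=M_i\parallel\bigl(\parallel_{j\ne i}P_{i+k}(M_j)\bigr)$; as every factor other than $M_i$ is a language over $A_k$, the right-hand side is contained in $M_i$, whence $M_i=P_{i+k}(K)$. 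Projecting instead by $P_k$ and using $P_k(M_i)\subseteq L(S_k/G_k)$ (which follows from $M_i\subseteq L(G_i)\parallel L(S_k/G_k)$ by projecting onto $A_k$) gives $P_k(K)=\parallel_{j=1}^n P_k(M_j)\subseteq L(S_k/G_k)\subseteq P_k(K)$, so $L(S_k/G_k)=P_k(K)$. Now $P_k(K)=L(S_k/G_k)$, being a prefix-closed language realized by a partial-observation supervisor over $G_k$, is controllable with respect to $L(G_k)$ and $A_{k,u}$ and observable with respect to $L(G_k)$, $A_{k,o}$, $A_{k,c}$; and $P_{i+k}(K)=L(S_i/[G_i\|(S_k/G_k)])$ with $L(G_i\|(S_k/G_k))=L(G_i)\parallel\overline{P_k(K)}$ is, for the same reason, controllable with respect to $L(G_i)\parallel\overline{P_k(K)}$ and $A_{i+k,u}$ and observable with respect to the same language and $Q_{i+k}$. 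These are precisely conditional controllability and conditional observability of $K$.

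The step I expect to be the main obstacle is this forcing argument in the converse: conditional controllability and observability are stated relative to the \emph{fixed} reference languages $L(G_k)$ and $L(G_i)\parallel\overline{P_k(K)}$, so knowing $M_i$ and $L(S_k/G_k)$ merely up to inclusion is not enough — one must carefully exploit the alphabet inclusions $A_i\cap A_j\subseteq A_{sh}\subseteq A_k$ together with Lemma~\ref{lemma:Wonham} (and the elementary fact that a synchronous product with a language over a sub-alphabet only restricts, cf.\ Lemma~\ref{obvious}) to see that \eqref{eq:controlsynthesissafety} pins them down exactly. The remaining ingredients — the prefix-closed form of the synthesis result and the $n$-ary bookkeeping for synchronous products and projections — are routine.
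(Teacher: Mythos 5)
The paper does not actually prove this theorem---it is recalled from \cite{KomendaMS14a} (a special case of Theorem~13 there, extended to general $n\ge 2$) and stated with no in-paper proof---so there is nothing internal to compare against. Your argument is correct and follows the standard route for such results: the monolithic controllability-plus-observability existence theorem applied once on $A_k$ and once on each $A_i\cup A_k$, glued by conditional decomposability for sufficiency; and, for necessity, the forcing of $L(S_k/G_k)=P_k(K)$ and $L(S_i/[G_i\,\|\,(S_k/G_k)])=P_{i+k}(K)$ from the inclusion constraints built into Problem~\ref{problem:controlsynthesis} together with Lemma~\ref{lemma:Wonham} and the fact that $A_i\cap A_j\subseteq A_{sh}\subseteq A_k$.
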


  Recall that $K\subseteq L(G_1\| G_2\| \dots \|G_n \| G_k)$ is {\em conditionally controllable\/} for generators $G_1, G_2, \dots, G_n$ and a coordinator $G_k$ and uncontrollable alphabets $A_{i,u}$, $i=1,2,\dots ,n$, and $A_{k,u}$ if $P_k(K)$ is controllable with respect to $L(G_k)$ and $A_{k,u}$, and $P_{i+k}(K)$ is controllable with respect to $L(G_i) \parallel P_k(K)$ and $A_{i+k,u}=(A_i\cup A_k)\cap A_u$, for $i=1,2,\dots,n$.

  For coordination control with partial observations, the notion of conditional observability is of the same importance as observability for monolithic supervisory control theory with partial observations. We recall that the supervisors $S_i$, $i=1,2,\dots ,n$, are supervisors based on partial observations, because they have only information about observable events from $A_{i,o}$ and observable coordinator events $A_{k,o}$, but do not observe events from $A_{i+k}\setminus (A_{i,o}\cup A_{k,o})$.  
  
  A language $K\subseteq L(G_1\| G_2\| \dots \| G_n\| G_k)$ is {\em conditionally observable\/} with respect to the generators $G_i$ and $G_k$, controllable sets $A_{i,c}$ and $A_{k,c}$, and projections $Q_{i+k}$ and $Q_{k}$, where $Q_i: A_i^*\to A_{i,o}^*$, for $i=1,2,\dots ,n$, if $P_k(K)$ is observable with respect to $L(G_k)$, $A_{k,c}$, $Q_{k}$, and  $P_{i+k}(K)$ is observable with respect to $L(G_i) \parallel P_k(K)$, $A_{i+k,c}=A_c \cap (A_i \cup A_k)$, and $Q_{i+k}$, for $i=1,2,\dots ,n$.
 
  The coordination control theory has been extended to 
  the non-prefix-closed case in~\cite{JDEDS}. The extension consists in 
  introducing coordinators
  for nonblockingness based on abstractions
  that are natural observers.
  We now state an important result from  \cite[Theorem 7]{JDEDS} extended to  general $n\geq 2$.
  \begin{theorem}\label{thm:nonblocking}
    Consider a modular plant with local marked languages $L_i=L_m(G_i)\subseteq A_i^*$, $i=1,\dots ,n$, and let projections $P_k:A_i^*\to (A_i\cap A_k)^*$, with shared events included in $A_k$, be an $L_i$-observer, for $i=1,\dots ,n$. Define $C_k$ as the nonblocking generator with $L_m(C_k)=\parallel_{i=1}^n P_k(L_i)$ with notation $L_k=L_m(C_k)$, i.e., $L(C_k)=\overline{L_k}=\overline{\parallel_{i=1}^n P_k(L_i)}$. Then the coordinated system $G \parallel C_k$  is nonblocking, i.e., $\overline{\parallel_{i=1}^n L_i\parallel L_m(C_k)} = \|_{i=1}^n \overline{L_i} \parallel \overline{L_m(C_k)}$.
  \end{theorem}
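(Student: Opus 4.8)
The goal is to establish the language identity $\overline{\parallel_{i=1}^n L_i \parallel L_m(C_k)} = \parallel_{i=1}^n \overline{L_i} \parallel \overline{L_m(C_k)}$, which is exactly the claimed nonblockingness of $G \parallel C_k$. Write $L = \parallel_{i=1}^n L_i$ and $L_k = L_m(C_k) = \parallel_{i=1}^n P_k(L_i)$, let $R_i : A^* \to A_i^*$ and $P_k : A^* \to A_k^*$ be the projections of the global alphabet $A = \cup_{i=1}^n A_i$, and let $\pi_i : A_k^* \to (A_i \cap A_k)^*$ be the projection onto $A_i \cap A_k$. The first step is to simplify the left-hand side: since $A_k \supseteq A_{sh}$, Lemma~\ref{lemma:Wonham} gives $P_k(L) = \parallel_{i=1}^n P_k(L_i) = L_k$, hence $L \parallel L_k = L$ and the left-hand side equals $\overline{L}$. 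Because $\overline{X \parallel Y} \subseteq \overline{X} \parallel \overline{Y}$ for any languages (applied iteratively to the $n+1$ factors), it follows that $\overline{L} \subseteq \parallel_{i=1}^n \overline{L_i} \parallel \overline{L_k}$, so the whole theorem reduces to the reverse inclusion $\parallel_{i=1}^n \overline{L_i} \parallel \overline{L_k} \subseteq \overline{L}$.

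To prove it, fix $w$ in the left-hand side, so that $R_i(w) \in \overline{L_i}$ for every $i$ and $P_k(w) \in \overline{L_k} = \overline{\parallel_{j=1}^n P_k(L_j)}$. Choose $t \in \parallel_{j=1}^n P_k(L_j)$ with $P_k(w) \le t$ and write $t = P_k(w)\, r$ for some $r \in A_k^*$. For each $i$, projecting $t$ onto $A_i \cap A_k$ gives $\pi_i(t) \in P_k(L_i)$, and since projecting $w$ first to $A_i$ and then to $A_i \cap A_k$ coincides with projecting $w$ first to $A_k$ and then to $A_i \cap A_k$ --- that is, $P_k(R_i(w)) = \pi_i(P_k(w))$ --- we obtain $P_k(R_i(w)) \le \pi_i(t)$. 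Now the $L_i$-observer hypothesis on $P_k$ applies to $R_i(w) \in \overline{L_i}$: there is $v_i \in A_i^*$ with $R_i(w)\, v_i \in L_i$ and $P_k(R_i(w)\, v_i) = \pi_i(t)$. Comparing projections, $P_k(v_i) = \pi_i(r)$ --- the \emph{same} coordinator word $r$ for every $i$.

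It remains to merge the completions $v_1, \dots, v_n$ into a single word $v \in A^*$ with $R_i(v) = v_i$ for all $i$; then $R_i(w v) = R_i(w)\, v_i \in L_i$ for all $i$, so $w v \in L$ and $w \in \overline{L}$, finishing the argument. Because every event shared by two distinct $A_i$ lies in $A_k$, and every $P_k(v_i)$ equals the projection of the common word $r$ onto $A_i \cap A_k$, the words $v_i$ are jointly realizable; concretely, a second application of Lemma~\ref{lemma:Wonham} to the singleton languages $\{v_i\}$ yields $P_k\big(\parallel_{i=1}^n \{v_i\}\big) = \parallel_{i=1}^n \{\pi_i(r)\}$, and the right-hand side contains $r$, so $\parallel_{i=1}^n \{v_i\}$ is nonempty; any of its elements serves as $v$.

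I expect the merging step to be the main obstacle: for $n \ge 3$, pairwise-consistent local words need not admit a common realization (cyclically ordered shared events can obstruct it), so it is essential that all the $v_i$ are projections of the single coordinator word $r$ --- precisely what legitimizes the second use of Lemma~\ref{lemma:Wonham}. The remaining work is careful bookkeeping with the projections $R_i$, $P_k$, and $\pi_i$, the crucial identity being $P_k \circ R_i = \pi_i \circ P_k$ on $A^*$, on which the invocation of the observer property rests.
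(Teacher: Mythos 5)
Your proof is correct. Note that the paper itself gives no proof of this theorem --- it is recalled from \cite{JDEDS} (Theorem~7 there) --- but your argument is essentially the standard one behind that result: reduce the claim to $\parallel_{i=1}^n \overline{L_i} \parallel \overline{L_k} \subseteq \overline{\parallel_{i=1}^n L_i}$, use the $L_i$-observer property to extend each local component $R_i(w)$ to a marked word whose coordinator projection matches a \emph{single} completion $r$ of $P_k(w)$ inside $L_k$, and glue the local extensions via Lemma~\ref{lemma:Wonham}; your attention to the merging step (which is exactly where pairwise consistency would fail for $n\ge 3$ without the common word $r$) is the right place to be careful, and your handling of it is sound.
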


\section{Three-level coordination control}\label{sec:sup2cc}
  Since too many events may need to be included in the coordinator alphabet for systems with a large number of subsystems,  the top-down approach with three-level coordination control has been proposed in~\cite{CDC14}.

  Given a modular system $G=G_1 \| G_2 \| \dots \|G_n$, the three-level hierarchical structure depicted in Fig.~\ref{multicoordination} makes it possible to add coordinator events only locally (to low-level group coordinators).
 
  \begin{figure*}
    \centering
    \includegraphics[scale=1]{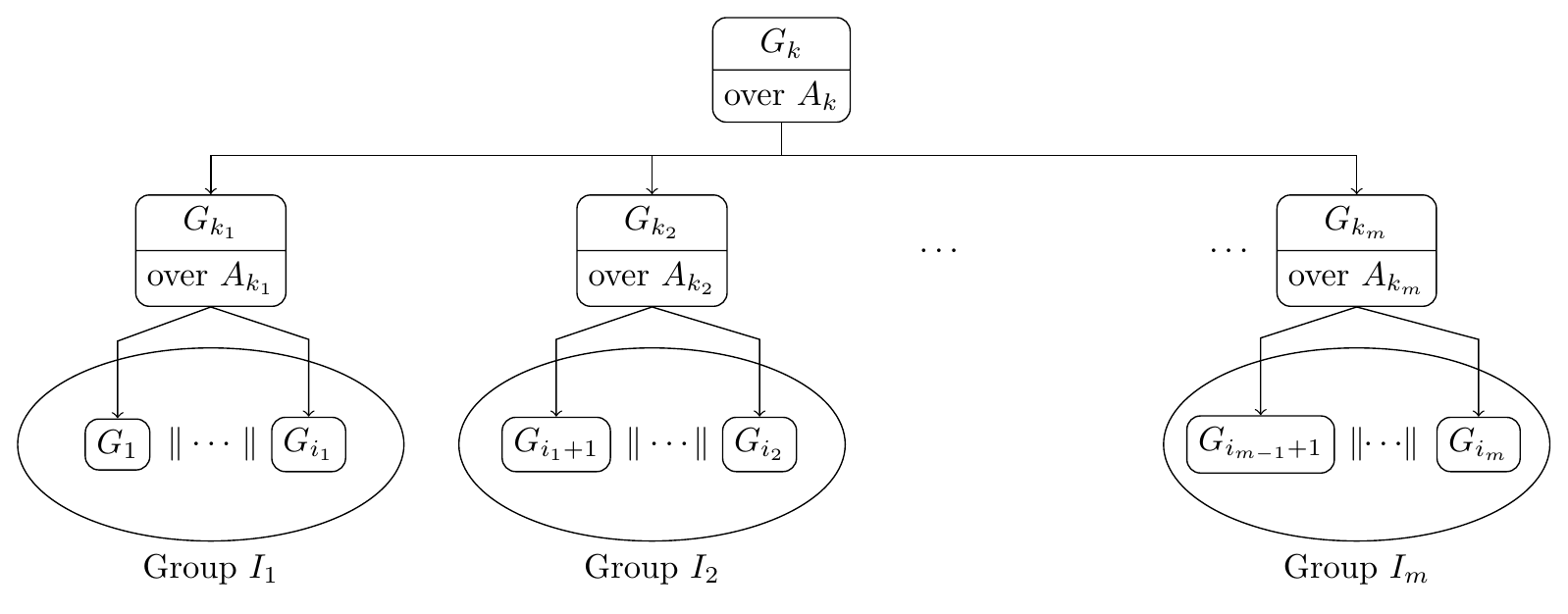}
    \caption{The multilevel control architecture under consideration.}
    \label{multicoordination}
  \end{figure*}

  The event sets of low-level groups $I_j$, $j=1,2, \dots ,m$, are denoted by 
  \[
    A_{I_j}=\bigcup\nolimits_{i\in I_j} A_i\,.
  \] 
  Recall that $P_{I_r}$ denotes the projection $P_{I_r}:A^*\to A_{I_r}^*$. Then $P_{I_r+k}: A^*\to (A_{I_r}\cup A_k)^*$ stands for the projection to the group alphabets extended with the high-level coordinator events. Similarly, $P_{j+k_r+k}: A^*\to (A_j\cup A_{k_r}\cup A_k )^*$ denotes the projection to the alphabet $A_j$ of an automaton $G_j$ belonging to the group $I_r$ extended with the alphabet $A_{k_r}$ of the group coordinator of the low-level group $I_r$ and the high-level coordinator alphabet $A_k$. 

  We start by constructing $A_k\subseteq A_{sh}=\bigcup\nolimits_{k,\ell\in\{1,2, \dots ,m\}}^{k\not =l} (A_{I_k}\cap A_{I_\ell})$  such that $K =\ \parallel_{r=1}^m P_{I_r+k} (K)$. Note that $A_{sh}$, that is, the set of events shared by the low-level groups, is  much smaller than the set of all shared events. The reason is that the events shared only among subsystems belonging to a given low-level group do not count for $A_{sh}$. An algorithm to construct $A_k$ as an extension of $A_{sh}$ making the first equation of Definition~\ref{mcd} below hold true is described in~\cite{SCL12}.

  In order to simplify the notation and definitions, we have included in~\cite{CDC14} into the group coordinator alphabets $A_{k_j}$ all events from the global coordinator by defining $A_{k_j} := A_{k_j}\cup A_k$, for $j=1,2, \dots, m$. This simplification enables us to use only the group coordinators $G_{k_j}$ in all the definitions below, which is more concise than using $G_{k_j} \| G_k$, but we have to bear in mind that from now on $G_{k_j}$ may also contain the high-level coordinator events from other groups than $I_j$.

  \begin{definition}[3-level conditional decomposability]\cite{CDC14}\label{mcd}$ $\\
    A language $K\subseteq A^*$ is said to be {\em three-level conditionally decomposable\/} with respect to the  alphabets $A_1$, $A_2$, \ldots, $A_n$, the high-level coordinator alphabet $A_{k}$, and the low-level coordinator alphabets $A_{k_1}$, $A_{k_2}$, \ldots, $A_{k_m}$ if
    \begin{align*} 
      K =\ \parallel_{j=1}^m P_{I_j+k} (K) && \text{ and } && 
      P_{I_j+k} (K) &=\ \parallel_{i\in I_j} P_{i+k_j} (K)\,
    \end{align*}
    for $j=1,2,\dots,m$.
    $\hfill\triangleleft$
  \end{definition} 
  
  Definition~\ref{mcd} makes sense, because on the right-hand side of the second equation $P_{i+k_j} (K)$ includes all events from both the group coordinator $A_{k_j}$ and the high-level coordinator $A_{k}$. 

  \begin{problem}[Three-level coordination control problem]
  \label{problem:2-controlsynthesis} $ $\\
    Consider the modular system $G=G_1 \| G_2 \| \dots \| G_n$ along with the three-level hierarchical structure of the subsystems organized into groups $I_j$, $j=1,2,\dots ,m\leq n$, on the low level. 
    The synchronous products $\parallel_{i\in I_j} G_i$, $j=1,2,\dots ,m$, then represent the $m$ high-level systems. 
    The coordinators $G_{k_j}$ are associated to groups of subsystems $\{G_i \mid i\in I_j\}$, $j=1,2,\dots, m$.  The three-level coordination control problem consists in synthesizing the supervisor $S_i$ for every low-level system $G_i$, $i=1,2,\ldots,n$, and the high-level supervisor $S_{k_j}$ supervising the group coordinator $G_{k_j}$, $j=1,2,\dots, m$, such that the specification $K=\overline{K}\subseteq L(G)$ is met by the closed-loop system, i.e.,
    \begin{flalign*}
      && \parallel_{j=1}^m  \parallel_{i\in I_j} L(S_i/  [G_i\| (S_{k_j}/G_{k_j})]) = K\,. && \triangleleft
    \end{flalign*}
  \end{problem}
  \medskip

  Low level (group) coordinators $G_{k_j}$, $j=1,2, \dots ,m$, are computed using Algorithm~\ref{alg1} below.
  \begin{algorithm}
    \caption{Computation of the group coordinators.}
    \label{alg1}
    For a specification $K$, the coordinator $G_{k_j}$ of the $j$-th group of subsystems $\{G_i \mid i\in I_j\}$ is computed as follows.
    \begin{enumerate}
      \item Set $A_{k_j} = \bigcup_{k,\ell\in I_j}^{k\neq \ell} (A_k\cap A_\ell)$ to be the set of all shared events of systems from the group $I_j$.
      \item Extend $A_{k_j}$ so that $P_{I_r+k}(K)$ is conditional decomposable with respect to $(A_i)_{i\in I_j}$ and $A_{k_j}$, for instance using a method described in~\cite{SCL12}.
      \item Set the coordinator equal to $G_{k_j}= \|_{i= 1}^{n} P_{k_j}(G_i)$.
    \end{enumerate}
  \end{algorithm}
  Recall that due to the extension of $A_{k_j}$ by high-level coordinator events, $A_k\subseteq A_{k_j}$, hence $L(G_k) \| L(G_{k_j})$ of~\cite{KMvS13} is reduced to $L(G_{k_j})$. Indeed, by our choice of the coordinators, $L(G_k) \| L(G_{k_j}) = P_k(L) \parallel P_{k_j}(L) = P_{k_j}(L) = L(G_{k_j})$, where $L=\parallel_{i= 1}^{n} L(G_i)$ is the global plant language and the second equality holds by Lemma~\ref{obvious}. Therefore, instead of the low-level coordinators $G_{k_j}$, $j=1,2,\dots ,m$, for subsystems belonging to the individual groups $\{G_i \mid i\in I_j\}$ and the high-level coordinators $G_k$ that coordinate the different groups, we are using only the low-level (group) coordinators $G_{k_j}$, but over larger alphabets compared to~\cite{KMvS13}. 

  Since the only known condition ensuring that the projected generator is smaller than the original one is the observer property~\cite{WW96} we might need to further extend the alphabets $A_{k_j}$ so that the projection $P_{k_j}$ is an $L(G_i)$-observer, for all $i\in I_j$. 
  
  The key concept is the following.
  \begin{definition}[\cite{KMvS13}]\label{def:2-conditionalcontrollability}
    Consider the setting and notations of Problem~\ref{problem:2-controlsynthesis}, and let $G_k$ be a coordinator. A language $K\subseteq L(\parallel_{i=1}^{n} G_i)$ is {\em three-level conditionally controllable\/} with respect to the generators $G_1$, $G_2$, \dots, $G_n$, the local alphabets $A_1$, $A_2$, \ldots, $A_n$, the low-level coordinator alphabets $A_{k_1}$, $A_{k_2}$, \dots, $A_{k_m}$, and the uncontrollable alphabet $A_{u}$ if for all $j=1,2,\dots,m$ 
    \begin{enumerate}
      \item\label{cc1} $P_{k_j}(K)$ is controllable with respect to $L(G_{k_j})$ and $A_{k_j,u}$,
      \item\label{cc2} $P_{i+k_j}(K)$ is controllable with respect to  $L(G_i) \parallel P_{k_j}(K)$ and $A_{i+k_j,u}$, for all $i\in I_j$.
      $\hfill\triangleleft$
    \end{enumerate}
  \end{definition}
  
 For the sake of brevity, $K$ will be called three-level conditionally controllable with respect to $G_i$, $i\in I_\ell$, and $G_{k_\ell}$, where some sets are not referenced.

  For multilevel systems with partial observations, three-level conditionally observability, cf.~\cite{ACC15}, is needed. Unfortunately, it is not closed under language unions and, therefore,  three-level conditional normality has been proposed in \cite{ACC15}, where it is shown that the supremal three-level conditionally normal language always exists.
  
  \begin{definition}\label{def:2-conditionalnormality}
    A language $K\subseteq L(\parallel_{i=1}^{n} G_i)$ is {\em three-level conditionally normal\/}
    with respect to the generators $G_1$, $G_2$, \dots, $G_n$, the local alphabets $A_1$, $A_2$, \ldots, $A_n$, the low-level coordinator alphabets $A_{k_1}$, $A_{k_2}$, \dots, $A_{k_m}$, and the corresponding natural projections if for all $j=1,2,\dots,m$
    \begin{enumerate}
      \item\label{cn1} $P_{k_j}(K)$ is normal with respect to $L(G_{k_j})$ and $Q_{k_j}$,
      \item\label{cn2} $P_{i+k_j}(K)$ is normal with respect to $L(G_i) \parallel P_{k_j}(K)$ and $Q_{i+k_j}$, for all $i\in I_j$.
      $\hfill\triangleleft$
    \end{enumerate}
  \end{definition}
  
  The computation of the supremal three-level conditionally controllable and conditionally normal sublanguage of $K$, denoted by $\suptwocCN(K, L, A, Q)$, has been studied in~\cite{ACC15}. We have shown that under some controllability and normality conditions on all coordinator alphabets it can be computed in a distributed way based on the following languages corresponding to supervisors for low-level group coordinators and local supervisors for individual subsystems, respectively. For all $j=1,2,\dots,m$ and $i\in I_j$,
  \begin{align}\label{sup2cc}
      \supCN_{k_j} & = \supCN(P_{k_j}(K),L(G_{k_j}), A_{k_j,u}, Q_{k_j}) \\
      \supCN_{i+k_j} & = \supCN(P_{i+k_j}(K), L(G_i) \| \supCN_{k_j}, A_{i+k_j,u}, Q_{i+k_j}) \nonumber
  \end{align}
  where $\supCN(K,L,A_u, Q)$ denotes the supremal sublanguage of $K$ controllable  with respect to $L$ and $A_u$ and normal with respect to $L$ and the natural projection $Q$, see~\cite{CL08}.

  As in the centralized coordination, the following inclusion always holds true.

  \begin{lem}\label{inclusion}
    For all $j=1,2,\dots ,m$ and for all $i\in I_j$, we have that 
    $P^{i+k_j}_{k_j}(\supCN_{i+k_j})\subseteq \supCN_{k_j}$. 
  \end{lem}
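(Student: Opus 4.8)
\emph{Proof.}
The plan is to deduce the inclusion from a containment between the underlying \emph{plant} languages, avoiding any appeal to preservation of controllability and normality under projection (which would be needed for a more direct argument and is not required here).

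First I would isolate the one structural fact drawn from~\eqref{sup2cc}: by construction $\supCN_{i+k_j}$ is a language that is normal with respect to the plant $L(G_i)\parallel\supCN_{k_j}$, so its prefix closure equals $Q_{i+k_j}^{-1}Q_{i+k_j}(\supCN_{i+k_j})\cap(L(G_i)\parallel\supCN_{k_j})$, and in particular
\[
  \supCN_{i+k_j} \subseteq L(G_i)\parallel\supCN_{k_j}\,.
\]
Applying the monotone projection $P^{i+k_j}_{k_j}:(A_i\cup A_{k_j})^*\to A_{k_j}^*$ to both sides, it then suffices to show $P^{i+k_j}_{k_j}(L(G_i)\parallel\supCN_{k_j})\subseteq\supCN_{k_j}$.

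For this last step I would just unfold the definition of the synchronous product over the ambient alphabet $A_i\cup A_{k_j}$: with $P^{i+k_j}_{i}$ and $P^{i+k_j}_{k_j}$ denoting the projections of $(A_i\cup A_{k_j})^*$ onto $A_i^*$ and $A_{k_j}^*$,
\[
  L(G_i)\parallel\supCN_{k_j} = (P^{i+k_j}_{i})^{-1}(L(G_i)) \cap (P^{i+k_j}_{k_j})^{-1}(\supCN_{k_j}) \subseteq (P^{i+k_j}_{k_j})^{-1}(\supCN_{k_j})\,,
\]
since $\supCN_{k_j}$ is a language over $A_{k_j}$. Applying $P^{i+k_j}_{k_j}$ and using $P(P^{-1}(M))=M$ for every $M$ over the target alphabet (natural projections are surjective) gives $P^{i+k_j}_{k_j}(L(G_i)\parallel\supCN_{k_j})\subseteq\supCN_{k_j}$, i.e.\ the familiar fact that projecting a synchronous product onto the alphabet of one of its factors yields a sublanguage of that factor. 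Chaining this with the first inclusion proves Lemma~\ref{inclusion}; this is precisely the argument underlying the analogous statement in the centralized coordination control, which is why it carries over to the multilevel case.

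I do not expect a real obstacle here, as the argument is purely definitional. The only point requiring care is the first inclusion: one has to be sure that, whatever precise convention is fixed for $\supCN(\cdot,\cdot,\cdot,\cdot)$, the supremal controllable-and-normal sublanguage is contained in its plant, which is forced by the normality condition $\overline{M}=Q^{-1}Q(M)\cap L$. A more pedestrian alternative would instead verify that $P^{i+k_j}_{k_j}(\supCN_{i+k_j})$ is controllable and normal with respect to $L(G_{k_j})$ and contained in $P_{k_j}(K)$, and then invoke the supremality of $\supCN_{k_j}$; I would avoid this route, since it leans on projection preserving controllability and normality and is therefore more delicate.
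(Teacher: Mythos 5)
Your proof is correct and takes essentially the same route as the paper's one-line argument, which is precisely that $\supCN_{k_j}$ is the $A_{k_j}$-component of the plant $L(G_i)\parallel\supCN_{k_j}$ over which $\supCN_{i+k_j}$ is computed, so the projection of $\supCN_{i+k_j}$ onto $A_{k_j}$ must land in $\supCN_{k_j}$. You have merely spelled out the two definitional steps (containment in the plant via normality, and $P^{i+k_j}_{k_j}(L(G_i)\parallel\supCN_{k_j})\subseteq\supCN_{k_j}$) that the paper leaves implicit.
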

  \begin{proof}
    The proof follows immediately from the definition of $\supCN_{i+k_j}$. Indeed, we have that $P^{i+k_j}_{k_j} (\supCN_{i+k_j}) \subseteq \supCN_{k_j}$, because $\supCN_{k_j}$ is part of the plant language of $ \supCN_{i+k_j}$ over the alphabet $A_{k_j}$.
  \end{proof}
  We recall the notation for the closed-loop corresponding
to group $I_j$, i.e. $\supcCN_j =\ \parallel_{i\in I_j} \supCN_{i+k_j}$ for $j=1,2,\dots ,m$.
  The  main result of \cite{ACC15} is now recalled. 
  \begin{theorem}[\cite{ACC15}]
    \label{thm:construction}
    Consider Problem~\ref{problem:2-controlsynthesis} and the languages defined in~(\ref{sup2cc}). 
    For $j=1,2,\dots ,m$ and $i\in I_j$, 
    let the languages $P^{i+k_j}_{k_j} (\supCN_{i+k_j})$ be controllable with respect to $L(G_{k_j})$ and $A_{k_j,u}$, and 
    normal with respect to $L(G_{k_j})$ and $Q_{k_j}$, and 
    let $P_k^{I_j}(\supcCN_j)$ be controllable with respect to $L(G_k)$ and $A_{k,u}$, and normal with respect to $L(G_k)$ and $Q_{k}$. Then 
    \[
      \suptwocCN(K, L, A, Q) =\ \parallel_{j=1}^m \parallel_{i\in I_j} \supCN_{i+k_j}\,.
    \]
    \hfill\QED
  \end{theorem}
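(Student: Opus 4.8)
\section*{Proof proposal}

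\noindent\textbf{Plan.}
Write $R := \parallel_{j=1}^m \parallel_{i\in I_j}\supCN_{i+k_j}$ and $S := \suptwocCN(K,L,A,Q)$. The plan is to prove the two inclusions $R\subseteq S$ and $S\subseteq R$ separately. Recall that $K=\overline K$, so $S$ is prefix-closed as well, a fact that will be used implicitly in the normality arguments.

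\noindent\textbf{The inclusion $R\subseteq S$.}
Here I would show that $R$ is itself a three-level conditionally controllable and conditionally normal sublanguage of $K$; supremality of $S$ then yields the inclusion. That $R\subseteq K$ follows from $\supCN_{i+k_j}\subseteq P_{i+k_j}(K)$, monotonicity of $\parallel$, and the three-level conditional decomposability of $K$ (Definition~\ref{mcd}). To check the controllability conditions \ref{cc1}--\ref{cc2} and the normality conditions \ref{cn1}--\ref{cn2} for $R$, I would first use Lemma~\ref{lemma:Wonham} to push $P_{k_j}$ and $P_{i+k_j}$ through the parallel composition, noting that every event shared across the family $\{\supCN_{i+k_\ell}\}$ lies in $A_k\subseteq A_{k_j}$, while every event shared within group $I_j$ lies in $A_{k_j}$ by construction; Lemmas~\ref{obvious} and~\ref{inclusion} then collapse the redundant factors. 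This expresses $P_{k_j}(R)$ as a parallel composition of the group-$j$ pieces $P^{i+k_j}_{k_j}(\supCN_{i+k_j})$ with $A_k$-projections of the closed loops $\supcCN_\ell$, $\ell\neq j$. The two hypotheses of the theorem are exactly what is needed here: the group-$j$ pieces are controllable and normal with respect to $L(G_{k_j}),Q_{k_j}$ by assumption, and the cross-group pieces are controllable and normal with respect to $L(G_k),Q_k$ by assumption. Combining these with preservation of controllability and normality under $\parallel$ (Lemma~\ref{feng}) and with transitivity (Lemma~\ref{lem_transC}) --- to lift the $L(G_k)$-level guarantees to $L(G_{k_j})$ (note $P_k(L(G_{k_j}))=L(G_k)$), and then the $L(G_{k_j})$-level guarantees to $L(G_i)\parallel P_{k_j}(R)$ and to the full plant $L$ --- gives conditions \ref{cc1}--\ref{cn2} for $R$.

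\noindent\textbf{The inclusion $S\subseteq R$.}
Since $S$ is three-level conditionally controllable and conditionally normal and $S\subseteq K$, the projection $P_{k_j}(S)$ is controllable with respect to $L(G_{k_j}),A_{k_j,u}$, normal with respect to $L(G_{k_j}),Q_{k_j}$, and contained in $P_{k_j}(K)$, so by supremality $P_{k_j}(S)\subseteq\supCN_{k_j}$. The key auxiliary step is to upgrade ``$P_{i+k_j}(S)$ controllable (resp. normal) with respect to $L(G_i)\parallel P_{k_j}(S)$'' to ``$P_{i+k_j}(S)$ controllable (resp. normal) with respect to the larger language $L(G_i)\parallel\supCN_{k_j}$''. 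For controllability this is a short case analysis: given an uncontrollable extension $sa\in L(G_i)\parallel\supCN_{k_j}$ with $s\in P_{i+k_j}(S)$, if $a\notin A_{k_j}$ then $P^{i+k_j}_{k_j}(sa)=P^{i+k_j}_{k_j}(s)\in P_{k_j}(S)$; if $a\in A_{k_j}$ then $a\in A_{k_j,u}$ and $P^{i+k_j}_{k_j}(s)a\in\supCN_{k_j}\subseteq L(G_{k_j})$, so controllability of $P_{k_j}(S)$ with respect to $L(G_{k_j})$ re-enters $P_{k_j}(S)$; either way $sa\in L(G_i)\parallel P_{k_j}(S)$ and condition \ref{cc2} for $S$ finishes it. The normality upgrade is analogous, using that $Q_{k_j}\circ P^{i+k_j}_{k_j}$ factors through $Q_{i+k_j}$ and that $P_{k_j}(S)$ is normal with respect to $L(G_{k_j})$ (condition \ref{cn1} for $S$). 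Hence $P_{i+k_j}(S)\subseteq P_{i+k_j}(K)$ is controllable and normal with respect to $L(G_i)\parallel\supCN_{k_j},Q_{i+k_j}$, so by supremality $P_{i+k_j}(S)\subseteq\supCN_{i+k_j}$. Since the alphabets $\{A_i\cup A_{k_j}\}$ cover $A$, we always have $S\subseteq\parallel_{j=1}^m\parallel_{i\in I_j}P_{i+k_j}(S)$, and combining with the last inclusion gives $S\subseteq R$.

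\noindent\textbf{Main obstacle.}
I expect the difficult part to be the first inclusion, and specifically the bookkeeping forced by folding the high-level coordinator alphabet $A_k$ into every group alphabet $A_{k_j}$: after projecting $R$ onto $A_{k_j}$ one obtains not only the group-$j$ languages but also $A_k$-projections of the closed loops of all other groups, and establishing that this mixture remains controllable and normal with respect to $L(G_{k_j})$ is precisely what makes the two hypotheses of the theorem indispensable, with Lemma~\ref{lem_transC} acting as the bridge between the $L(G_k)$-level and the $L(G_{k_j})$-level. The second inclusion is comparatively routine once the controllability/normality upgrade lemma is in place.
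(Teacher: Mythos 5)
Your proposal is correct and follows essentially the same route as the paper: the inclusion $R\subseteq S$ by verifying three-level conditional controllability and normality of $R$ (via Lemmas~\ref{lemma:Wonham}, \ref{feng}, \ref{inclusion} and the two hypotheses) and then invoking supremality, and the inclusion $S\subseteq R$ by showing $P_{i+k_j}(S)\subseteq\supCN_{i+k_j}$ through the transitivity upgrade of Lemma~\ref{lem_transC} --- which is exactly the argument the paper carries out in its proof of the closely related Theorem~\ref{aposteriori}. Your elementary case analysis on $a\in A_{k_j}$ versus $a\notin A_{k_j}$ is just an unfolded version of the Lemma~\ref{feng}--Lemma~\ref{lem_transC} combination the paper uses, so the two arguments coincide in substance.
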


\section{Combined Approach to Multilevel Coordination Control of Modular DES}\label{sec:combined}
  Recently, we have proposed two different constructive approaches to multilevel supervisory control: bottom-up~\cite{ECC14} and top-down~\cite{CDC14}. Bottom-up approach relies only on original notions of conditional decomposability and conditional controllability of the specification language,  while top-down approach requires the specification to be conditionally decomposable and conditionally controllable with respect to the multilevel architecture. 
  In the top-down approach, the specification is decomposed a priori in the top-down manner: firstly, with respect to the high-level coordinator alphabet and then with respect to the group coordinators for all low-level groups of subsystems. The advantage of the top-down approach is that, for prefix-closed specifications, the computation at the lowest level consists in constructing supervisors for individual subsystems and no further computation at the higher level is needed.

  However, the least restrictive supervisors 
can only be computed under some conditions. We have presented in~\cite{CDC14} the sufficient conditions for distributed computation of full observation supervisors yielding the maximally permissive solution in the three-level hierarchical control architecture. This condition has been generalized in~\cite{ACC15} in two directions: to partial observations and to weaker sufficient conditions for the distributed computation of local supervisors assisted by coordinators. These weaker sufficient conditions are homogeneous, i.e., they are both formulated in terms of controllability and normality for both hierarchical interfaces: between the low level and the middle level and between the middle level and the top level.

In this section all languages are assumed to be prefix-closed.
  In the general case with non-prefix-closed specifications, the individual supervisors of the groups can be conflicting and also the group supervisors on the higher level might be conflicting. Therefore, additional coordinators for nonblocking should be constructed at all levels, which is presented in the next section. 

  To conclude, the main drawback of the top-down approach is the lack of generality: the blocking issue and the restrictive conditions for a distributed computation of the maximally permissive solution: supremal three-level conditionally controllable sublanguages.

  In this paper, we propose a combined approach that can be described as a top-down decomposition followed by a bottom-up computation. This proposed approach combines the strong features of both approaches, namely the low complexity of the top-down approach with the generality of the bottom-up approach that enables effective synthesis of both a posteriori supervisors to make sufficient conditions for distributed computation of supervisors hold and of coordinators for nonblocking.

  It is then natural to impose controllability and normality of low-level supervisors with respect to group coordinators and also controllability and normality of group supervisors with respect to the high coordinator at the very top level. In this paper, we will show that these supervisors can be synthesized in the bottom-up manner, i.e., we start with the supervisors on coordinator alphabets of each low-level group.

  In the case that controllability of the projected low-level supervisors with respect to the group coordinators and/or controllability of projected group supervisors with respect to the top coordinator from Theorem~\ref{thm:construction} do not hold, a posteriori supervisors on respective coordinator alphabets can be synthesized to make these conditions hold. 

  We will show that both a posteriori supervisors and coordinators for nonblocking can be computed in the bottom-up manner. This is the main message of this paper: first, we perform a top-down design of coordinators based on two-level decomposition of the specification and this top-down design is followed by a bottom-up computation of a posteriori supervisors and coordinators for nonblocking.

It is easy to shown that the language  $\parallel_{j=1}^m \parallel_{i\in I_j} \supCN_{i+k_j}$ of Theorem~\ref{thm:construction} further restricted by a posteriori supervisors will always satisfy all controllability and normality conditions required in Theorem~\ref{thm:construction}.
 It appears that controllability and normality conditions on the low-level coordinator alphabets and on the high-level coordinator alphabet can be imposed by a posteriori supervisors defined a follows.

  We first compute a posteriori supervisors on the low-level  coordinator alphabets $A_{k_j}$, $j=1,2,\dots,m$, by
    \begin{equation}
    \label{aposteriori_low}
    \widetilde{  \supCN_{k_j}} = \cap_{i\in I_j} \supCN(P_{k_j}(\supCN_{i+k_j}), L(G_{k_j}), A_{k_j,u},Q_{k_j}).
\end{equation}
  This supervisor will guarantee controllability and normality with respect to the group coordinator alphabets as required in Theorem~\ref{thm:construction}. It should be noticed that
  \begin{equation}
  \label{distr_computation}
    \widetilde{ \supCN_{k_j}} = \supCN(P_{k_j}(\parallel_{i\in I_j}\supCN_{i+k_j}), L(G_{k_j}), A_{k_j,u},Q_{k_j}),
  \end{equation}
  but the former distributed form is more suitable for computation of a posteriori supervisors $\widetilde{  \supCN_{k_j}}$ on group coordinator alphabets because of obvious complexity reasons. Otherwise stated, the a posteriori supervisors can be distributed and their roles consist simply in replacing local supervisors for individual subsystems $G_i$ at the lowest level: $\supCN_{i+k_j}$ by 
  \begin{align}
  \label{supervisor_resulting_low}
    \supCN_{i+k_j} \parallel \widetilde{  \supCN_{k_j}}&=\\
  \supCN_{i+k_j} \parallel & \cap_{i\in I_j} \supCN(P_{k_j}(\supCN_{i+k_j}), L(G_{k_j}))
\nonumber.
  \end{align}

  Moreover, we show in Theorem~\ref{aposteriori}  that the restriction induced by the supervisor does alter maximal permissiveness. Then we compute the a posteriori supervisor on the high-level coordinator alphabet by
  \[
   \widetilde{  \supCN_k }= \supCN(P_k( \parallel_{j=1}^m \supcCN_j), L(G_k), A_{k,u},Q_{k}),
  \]
  where $\supcCN_j=\parallel_{i\in I_j} \supCN_{i+k_j} \parallel  \widetilde{  \supCN_{k_j}}$ is the resulting group supervisor. The supervisor $\widetilde{  \supCN_k}$ will guarantee controllability and normality with respect to the high-level coordinator $L(G_k)$.

  Note that it is easy to see that $\widetilde{ \supCN_k}$ can be computed in the modular way as follows:
  \begin{align}
  \label{aposteriori_high}
    \widetilde{ \supCN_k} & = 
 \parallel_{j=1}^m  \supCN(P_k( \supcCN_j), L(G_k), A_{k,u},Q_{k})
  \end{align}
  This is a very special case of modular control with multiple prefix-closed specifications \cite{IEEETAC08} for a single plant $G_k$. Therefore it follows from the assumption that all languages involved are prefix-closed, hence the languages in the intersection are trivially nonconflicting, which is required for preserving normality and controllability under intersection. 

  It can be shown that the language $M$ further restricted by these supervisors will always satisfy all controllability and normality conditions required in Theorem~\ref{thm:construction}. Somewhat surprisingly, it can be shown that these a posteriori supervisors do not alter another important property: supremality. The result below shows that the solution is still minimally restrictive with respect to our two level coordination control architecture, which is formally shown in the second inclusion
  of the proof.

  \begin{theorem}\label{aposteriori}
    Consider the setting of Theorem~\ref{thm:construction}. Then
    \begin{multline*}
      \suptwocCN(K, L, A, Q)\\
        = (\parallel_{j=1}^m (\parallel_{i\in I_j} \supCN_{i+k_j})
          \parallel \widetilde{  \supCN_{k_j}})\parallel \widetilde{  \supCN_k}\,
    \end{multline*}
    where a posteriori supervisors $\widetilde{  \supCN_{k_j}}$ and
 $\widetilde{  \supCN_k}$ are defined in equations (\ref{aposteriori_low}) and (\ref{aposteriori_high}), respectively.
  \end{theorem}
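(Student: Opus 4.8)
The plan is to prove the claimed equality by two inclusions between $M:=\suptwocCN(K,L,A,Q)$ and the right-hand side, which I abbreviate as $N=(\parallel_{j=1}^m\supcCN_j)\parallel\widetilde{\supCN_k}$ with $\supcCN_j=(\parallel_{i\in I_j}\supCN_{i+k_j})\parallel\widetilde{\supCN_{k_j}}$. Throughout I use that all languages are prefix-closed (so that conflicts never arise and Lemma~\ref{feng} and the modular-control argument of~\cite{IEEETAC08} apply to the intersections that appear), that $A_k\subseteq A_{k_j}\subseteq A_{I_j+k}$, and that every event shared between two distinct low-level groups lies in $A_k$, so that by Lemma~\ref{lemma:Wonham} the projections $P_{k_j}$ and $P_k$ commute with all the parallel compositions below; I also use the elementary facts that $L'\parallel L''\subseteq L'$ when $L''$ lives over a sub-alphabet, that restricting a controllable-and-normal language to a smaller plant preserves controllability and normality (w.r.t.\ that smaller plant), and Lemmas~\ref{obvious},~\ref{inclusion},~\ref{lem_transC}.

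For the inclusion $N\subseteq M$ I will verify that $N$ is a three-level conditionally controllable and conditionally normal sublanguage of $K$ and conclude by maximality of $M$. Containment $N\subseteq K$ follows from $\parallel_{i\in I_j}\supCN_{i+k_j}\subseteq P_{I_j+k}(K)$ together with the first equation of three-level conditional decomposability of $K$. For Definitions~\ref{def:2-conditionalcontrollability} and~\ref{def:2-conditionalnormality} the key computation, obtained by cancelling the factors absorbed by the a posteriori supervisors (using $\widetilde{\supCN_{k_j}}\subseteq\parallel_{i\in I_j}P_{k_j}(\supCN_{i+k_j})$ from~(\ref{distr_computation}) and $\widetilde{\supCN_k}\subseteq P_k(\supcCN_{j'})$ for every $j'$ from~(\ref{aposteriori_high})), is that $P_{k_j}(N)=\widetilde{\supCN_{k_j}}\parallel\widetilde{\supCN_k}$ and $P_{i+k_j}(N)=\supCN_{i+k_j}\parallel P_{k_j}(N)$. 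Conditions~\ref{cc1} and~\ref{cn1} then hold because $\widetilde{\supCN_{k_j}}$ is by construction controllable and normal w.r.t.\ $L(G_{k_j})$ and $P_k^{-1}(\widetilde{\supCN_k})$ is controllable and normal w.r.t.\ $L(G_{k_j})$ (an uncontrollable or unobservable-image event in $A_k$ leaving $\widetilde{\supCN_k}$ but staying in $L(G_{k_j})$ would, after applying $P_k$, leave $\widetilde{\supCN_k}$ while staying in $L(G_k)$, contradicting its controllability/normality w.r.t.\ $L(G_k)$), and the intersection of the two preserves both properties. Conditions~\ref{cc2} and~\ref{cn2} follow since $\supCN_{i+k_j}$ is, by definition, controllable and normal w.r.t.\ $L(G_i)\parallel\supCN_{k_j}$, and $P_{k_j}(N)\subseteq\widetilde{\supCN_{k_j}}\subseteq\supCN_{k_j}$ is normal w.r.t.\ $\supCN_{k_j}$, so restricting $\supCN_{i+k_j}$ to the smaller plant $L(G_i)\parallel P_{k_j}(N)$ again yields a controllable and normal language w.r.t.\ that plant.

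For the inclusion $M\subseteq N$ — the claim that the a posteriori supervisors do not reduce permissiveness — I will establish the three projection inequalities $P_{i+k_j}(M)\subseteq\supCN_{i+k_j}$, $P_{k_j}(M)\subseteq\widetilde{\supCN_{k_j}}$ and $P_k(M)\subseteq\widetilde{\supCN_k}$; since $N$ is the parallel composition of $\supCN_{i+k_j}$, $\widetilde{\supCN_{k_j}}$ and $\widetilde{\supCN_k}$ over the corresponding sub-alphabets, they together give $M\subseteq N$. First, $P_{k_j}(M)$ is a controllable-and-normal (w.r.t.\ $L(G_{k_j})$) sublanguage of $P_{k_j}(K)$, hence $P_{k_j}(M)\subseteq\supCN_{k_j}$. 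The decisive step is then a ``plant-enlargement'' lemma: controllability and normality of $P_{i+k_j}(M)$ w.r.t.\ $L(G_i)\parallel P_{k_j}(M)$ transfer to the larger plant $L(G_i)\parallel\supCN_{k_j}$, because an uncontrollable (resp.\ unobservable-image) event enlarging $P_{i+k_j}(M)$ inside $L(G_i)\parallel\supCN_{k_j}$ but not inside $L(G_i)\parallel P_{k_j}(M)$ would, via its $P_{k_j}$-image and $P_{k_j}(M)\subseteq\supCN_{k_j}\subseteq L(G_{k_j})$, violate controllability (resp.\ normality) of $P_{k_j}(M)$ w.r.t.\ $L(G_{k_j})$; hence $P_{i+k_j}(M)\subseteq\supCN_{i+k_j}$. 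This yields $P_{I_j+k}(M)\subseteq\parallel_{i\in I_j}\supCN_{i+k_j}$, so $P_{k_j}(M)\subseteq P_{k_j}(\parallel_{i\in I_j}\supCN_{i+k_j})$, and combined with controllability and normality of $P_{k_j}(M)$ w.r.t.\ $L(G_{k_j})$ this gives $P_{k_j}(M)\subseteq\widetilde{\supCN_{k_j}}$ by~(\ref{distr_computation}); therefore $P_{I_j+k}(M)\subseteq\supcCN_j$, whence $P_k(M)\subseteq P_k(\parallel_j\supcCN_j)$. Finally $P_k(M)\subseteq\widetilde{\supCN_k}$ follows once $P_k(M)$ is shown to be controllable and normal w.r.t.\ $L(G_k)$, $A_{k,u}$ and $Q_k$.

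The main obstacle is precisely this last point — deducing controllability and normality of $P_k(M)$ w.r.t.\ $L(G_k)$ from the three-level conditional controllability and normality of $M$ (equivalently, from those of $P_{k_j}(M)$ w.r.t.\ $L(G_{k_j})$) — together with the analogous plant-change manipulations used repeatedly above. Such transfers fail for arbitrary languages; here they go through because the abstractions $P_{k_j}$ (onto the group-coordinator alphabets) and $P_k$ (onto the high-level coordinator alphabet) are observers by construction, which is exactly the extension of the coordinator alphabets discussed after Algorithm~\ref{alg1}, so that projecting a controllable-and-normal language keeps it controllable and normal, and because the uncontrollable/unobservable behaviour removed when shrinking a plant from $\supCN_{k_j}$ to $P_{k_j}(M)$ or $\widetilde{\supCN_{k_j}}$ is itself controlled/observed away (captured by Lemmas~\ref{feng} and~\ref{lem_transC}). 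A secondary, purely bookkeeping obstacle is checking that the projections commute with the parallel compositions — i.e.\ that across groups only the shared high-level events of $A_k$ intervene — which is what makes the closed-form expressions for $P_{k_j}(N)$, $P_{i+k_j}(N)$ and $P_k(N)$ valid.
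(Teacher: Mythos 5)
Your proposal follows essentially the same route as the paper's own proof: the easy inclusion is obtained by showing the right-hand side is a three-level conditionally controllable and conditionally normal sublanguage of $K$ (via the closed forms $P_{k_j}(N)=\widetilde{\supCN_{k_j}}\parallel\widetilde{\supCN_k}$ and $P_{i+k_j}(N)=\supCN_{i+k_j}\parallel P_{k_j}(N)$, Lemma~\ref{lemma:Wonham} for distributivity, and Lemma~\ref{feng} for composing controllable/normal pieces) and invoking supremality, while the hard inclusion is reduced to exactly the paper's three projection inclusions $P_{i+k_j}(\suptwocCN)\subseteq\supCN_{i+k_j}$, $P_{k_j}(\suptwocCN)\subseteq\widetilde{\supCN_{k_j}}$ and $P_k(\suptwocCN)\subseteq\widetilde{\supCN_k}$, with your ``plant-enlargement'' argument being a hands-on restatement of the transitivity Lemma~\ref{lem_transC} that the paper uses.

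One point needs correction. You identify the right obstacle for the last inclusion---establishing that $P_k(\suptwocCN)$ is controllable and normal with respect to $L(G_k)$, $A_{k,u}$ and $Q_k$, which is not among the two items of Definitions~\ref{def:2-conditionalcontrollability} and~\ref{def:2-conditionalnormality}---but your proposed resolution is not valid: the observer property of $P_{k_j}$ and $P_k$ does \emph{not} imply that projection preserves controllability or normality. Observers guarantee that the abstraction behaves well with respect to marking/nonblockingness (and that the projected generator is no larger); preservation of controllability under projection additionally requires a condition such as output control consistency, which is neither assumed nor established here. So the step ``$P_{k_j}(\suptwocCN)$ is controllable and normal w.r.t.\ $L(G_{k_j})$, hence $P_k(\suptwocCN)=P_k^{k_j}(P_{k_j}(\suptwocCN))$ is controllable and normal w.r.t.\ $L(G_k)$'' does not follow from what you cite. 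To be fair, the paper's own proof dispatches inclusion (iii) with ``the same arguments as in (ii)'', implicitly treating the top-level controllability/normality of $P_k(\cdot)$ as part of the setting inherited from Theorem~\ref{thm:construction}; if you adopt that reading of ``the setting of Theorem~\ref{thm:construction}'' the step is immediate and the observer property is not needed at all, but as a free-standing justification your argument has a gap here.
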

  \begin{proof}
    For simplicity, denote $\suptwocCN(K, L, A, Q) = \suptwocCN$, and let us use the notation 
    \[
      M_j = \supcCN_j =\ \parallel_{i\in I_j} \supCN_{i+k_j} \parallel  \widetilde{  \supCN_{k_j}}
    \]
    for the resulting language of the (centralized) coordination control for each group $I_j$, $j=1,2,\dots ,m$. 
    We denote
    \[
      M=\ \parallel_{j=1}^m M_j \parallel \widetilde{  \supCN_k }\,. 
    \]
    Hence, we need to show that $\suptwocCN=\parallel_{j=1}^m M_j$.
    
    In order to show the inclusion $M \subseteq \suptwocCN$, it suffices to prove that $M$ is three-level conditionally controllable and conditionally normal with respect to $G_i$, $i\in I_\ell$, and $G_{k_\ell}$, for $\ell = 1,2,\dots, m$. Then, since both $M$ and $\suptwocCN$ are sublanguages of $K$, and $\suptwocCN$ is the supremal one having these properties, it will follow that $M \subseteq \suptwocCN$.
    
    For items 1 of three-level conditional controllability and conditional normality, we show that, for any $j=1,2,\dots, m$, $M_j$ is conditionally controllable and conditionally normal with respect to $G_i$, $i\in I_j$, $L(G_{k_j})$, $A_{k_j,u}$, and $Q_{k_j}$. First, note that
    \begin{eqnarray*}\label{distribution}
      P_{k_j}(M) &= P_{k_j}(\parallel_{\ell = 1}^m M_\ell  \parallel  
\widetilde{ \supCN_k}) =\\ & P_{k_j}(M_j) \parallel \parallel_{\ell=1,2,\dots ,m}^{\ell \neq j} P_{k_j}(M_\ell)
\parallel   \widetilde{ \supCN_k}\,
    \end{eqnarray*}
    because $A_{k_j}\supseteq A_k$ and  $A_{k_j}$ contains all shared events in the
composition.
    Moreover, $P_{k_j}(M_j) = P_{k_j}(\|_{i\in I_j} \supCN_{i+k_j} \parallel  
\widetilde{ \supCN_{k_j}} ) = \cap_{i\in I_j} P_{k_j} (\supCN_{i+k_j}) \cap  \tilde \supCN_{k_j}$ , because of Lemma~\ref{lemma:Wonham} and the fact that $A_{k_j}$ contains all shared events of subsystems of the group $I_{j}$).

 It is then easy to see that $P_{k_j}(M_j)=\widetilde{\supCN_{k_j}}$ is controllable and normal with respect to $L(G_{k_j})$, $A_{k_j,u}$ and $Q_{k_j}$.
We now show $M_j$, $j=1,2,\dots, m$, are conditionally controllable and conditionally normal with respect to their groups $G_i$, $i\in I_j$, and $G_{k_j}$.

   Since the distributivity holds due to Lemma~\ref{lemma:Wonham}, $P_{i+k_j}(M_j) = P_{i+k_j}(\|_{i'\in I_j} \supCN_{i'+k_j} \parallel  \widetilde{\supCN_{k_j} } ) = \|_{i'\in I_j} P_{k_j} (\supCN_{i'+k_j}) \parallel  \widetilde{ \supCN_{k_j}} = \supCN_{i+k_j} \parallel \|_{i'\in I_j}^{i\neq i'} P_{k_j} (\supCN_{i'+k_j}) \parallel  \widetilde{ \supCN_{k_j}}$. Observe that 
    \[
      P_{i+k_j}(M_j) = \supCN_{i+k_j} \parallel P_{k_j}(M_j)\,,
    \]
    since $\supCN_{i+k_j} \parallel P_{k_j} (\|_{i'\in I_j} \supCN_{i'+k_j}
\parallel \widetilde{ \supCN_{k_j}}) = \supCN_{i+k_j} \parallel \|_{i'\in I_j} P_{k_j} (\supCN_{i'+k_j}) \parallel \widetilde{ \supCN_{k_j}} = \supCN_{i+k_j} \parallel \|_{i'\in I_j}^{i\neq i'} P_{k_j} (\supCN_{i'+k_j}) \parallel \widetilde{ \supCN_{k_j}}=
P_{i+k_j}(M_j)$. 

Therefore, by Lemma~\ref{feng}, $P_{i+k_j}(M_j)$ is controllable and normal with respect to $[L(G_i) \parallel \supCN_{k_j}] \parallel P_{k_j}(M_j) = L(G_i) \parallel P_{k_j}(M_j)$, where the last equality is by the fact that $P_{k_j} (M_j) \subseteq \supCN_{k_j}$, for any $j=1,2,\dots ,m$ and $i\in I_j$.

Altogether, $M_j$, $j=1,2,\dots, m$, are conditionally controllable and conditionally normal with respect to their groups $G_i$, $i\in I_j$, and $G_{k_j}$.

    Furthermore, for $\ell=1,2, \dots ,m$, $\ell\neq j$, 
    \begin{align}\label{eq1}
      P_{k_j}(M_\ell) = P_k(M_\ell)\,, 
    \end{align}
    because 
    $M_\ell \subseteq A_{I_\ell}^*$, 
    $A_k \subseteq A_{k_j} \subseteq A_{I_j}\cup A_k$, 
    $A_{I_j} \cap A_{I_\ell}\subseteq A_k$, whence
    $A_{k_j}\cap A_{I_\ell} = A_k \cap A_{I_\ell}$. 
 
Now, we have
 $P_{k}(M_\ell) \parallel  \widetilde{  \supCN_k}= P_k ( \parallel_{i\in I_{\ell}} \supCN_{i+k_{\ell}} \parallel  \widetilde{  \supCN_{k_{\ell}}} \parallel  \widetilde{  \supCN_k}=\parallel  \widetilde{  \supCN_k}$.
    
 This is because
 \begin{align}
\widetilde{  \supCN_k}&=\parallel_{j=1}^m  \supCN(P_k( \supcCN_j), L(G_k), A_{k,u},Q_{k})\\
 &=\parallel_{j=1}^m  \supCN(P_k(\parallel_{i\in I_j} \widetilde{  \supCN_{i+k_j}}), L(G_k), A_{k,u},Q_{k})\\
& = \parallel_{j=1}^m  \cap_{i\in I_j} \supCN(P_k(\widetilde{  \supCN_{i+k_j}}), L(G_k), A_{k,u},Q_{k})\,.\nonumber
 \end{align}
  
Therefore,
$P_k(M_\ell) \parallel  \widetilde{  \supCN_k}=  \widetilde{  \supCN_k}$ are controllable and normal with respect to $L(G_k)$, $A_{k,u}$, and $Q_{k}$, for $\ell = 1,2,\dots ,m$.

    Altogether, in accordance with Lemma~\ref{feng}, we obtain that $P_{k_j}(M) =\ \parallel_{\ell=1}^m P_{k_j}(M_\ell) \parallel P_{k_j}(\widetilde{  \supCN_k })= P_{k_j}(M_j) \parallel \parallel_{\ell = 1,2,\dots ,m}^{\ell \neq j} P_{k_j}(M_\ell)
\parallel \widetilde{  \supCN_k }$ is controllable and normal with respect to $L(G_{k_j}) \parallel \parallel L(G_k ).$ We recall that  $L(G_{k_j}) \parallel L(G_{k}) = L(G_{k_j})$. Therefore, $P_{k_j}(M)$ is controllable with respect to $L(G_{k_j})$ and $A_{k_j,u}$, and normal with respect to $L(G_{k_j})$ and $Q_{k_j}$. This shows items~\ref{cc1} of both three-level conditional controllability and conditional normality.


  In order to show item 2 of three-level conditional controllability and 
  conditional normality,  it must be shown that
  $P_{i+k_j}(M) = P_{i+k_j}(\parallel_{\ell=1}^m M_\ell \parallel 
 \widetilde{  \supCN_k }  )$ 
  is controllable with respect to $L(G_i) \parallel P_{k_j}(M)$ and $A_{i+k_j,u}$, and normal with respect to $L(G_i) \parallel P_{k_j}(M)$ and $Q_{i+k_j}$.
  Note that $P_{k_j}(M)=P_{k_j}(M_j) \parallel \parallel_{\ell=1,2,\dots ,m}^{\ell \neq j} P_{k_j}(M_{\ell})\parallel  \widetilde{  \supCN_k }$, because due to 
  $A_{k_j}\subseteq A_k$ we have
$ P_{k_j}( \widetilde{  \supCN_k }= \widetilde{  \supCN_k }$.

  In a similar way as above, we get
  \begin{align*}
    P_{i+k_j}(M) & = P_{i+k_j}(M_j) \parallel \parallel_{\ell=1,2,\dots ,m}^{\ell \neq j} P_{i+k_j}(M_\ell)  \parallel  P_{i+k_j}(\widetilde{  \supCN_k })  \\
                 & = P_{i+k_j}(M_j) \parallel \parallel_{\ell=1,2,\dots ,m}^{\ell \neq j} P_{k_j}( M_\ell ) \parallel \widetilde{  \supCN_k }
  \end{align*}
  since, for $j\neq\ell$, $A_{I_j}\cap A_{I_\ell}\subseteq A_k \subseteq A_{k_j}$ fulfills the requirements of Lemma~\ref{lemma:Wonham}, which justifies the first equation. Moreover, it also implies that $P_{i+k_j}(M_\ell)=P_k(M_\ell)=P_{k_j}(M_\ell)$, see equation~(\ref{eq1}), which justifies the second equation.
  Furthermore, from above 
We recall at this point that $M_j$ are conditionally controllable and conditionally normal with respect to their groups $G_i$, $i\in I_j$, the group coordinators $L(G_{k_j})$, whence for all $j=1,2,\dots ,m$ and for all $i \in I_j$ we have that 
  we have that $P_{i+k_j}(M_j)$ are controllable and normal with respect to $L(G_i)\parallel P_{k_j}(M_j)$, $A_{i+k_j,u}$, and $Q_{i+k_j}$. 
  It is obvious that languages $P_{k_j}(M_\ell)$ for $\ell=1,2,\dots ,m$, $\ell \neq j$, are controllable and normal with respect to themselves.
  Finally, $\widetilde{  \supCN_k }$ is controllable normal with respect to 
  itself. 

  Therefore, according to Lemma \ref{feng}, $P_{i+k_j}(M)$ is controllable and normal with respect to $[L(G_i)\parallel P_{k_j}(M_j)] \parallel \|_{\ell=1,2,\dots ,m}^{\ell \neq j} P_{k_j}(M_\ell) \parallel  \widetilde{  \supCN_k }=L(G_i) \parallel \|_{\ell=1}^{m} P_{k_j}(M_\ell)  \parallel  \widetilde{  \supCN_k } = L(G_i) \parallel P_{k_j} (\|_{\ell=1}^{m} M_\ell)  \parallel  \widetilde{  \supCN_k }=L(G_i) \parallel P_{k_j}(M)$, $A_{i+k_j,u}$, and $Q_{i+k_j}$, which was to be shown. 
  Note that distributivity $P_{i+k_j} (\parallel_{\ell=1}^{m} M_\ell) =\ \parallel_{\ell=1}^{m} P_{i+k_j}(M_\ell)$ holds true in accordance with Lemma~\ref{lemma:Wonham}, because $A_{i+k_j}$ contains $A_{i+k}$ and $A_{i+k}$ contains all shared events of languages $P_{i+k_j}(M_\ell)$ over their respective alphabets $A_{I_\ell+i}$, $\ell=1,2,\dots ,m$. 
  More precisely, for $i\in I_j$ we have that
  \begin{eqnarray*} 
    A_{I_\ell+i}=
      \left\{
        \begin{array}{ll}
          A_{I_j} &  
             \hbox{if } \ell=j\\        
        A_{I_\ell}  &  \hbox{otherwise}
       \end{array}
       \right. 
  \end{eqnarray*}


  The converse $\suptwocCN \subseteq (\parallel_{j=1}^m~(\parallel_{i\in I_j}~\supCN_{i+k_j}) \parallel \widetilde{supCN_{k_j}})\parallel supCN'_k$ will be proven by showing that for all $j=1,2,\dots ,m$ and for all $i\in I_j$, 
  \begin{equation}
  \label{incl}
    P_{i+k_j}(\suptwocCN) \subseteq \supCN_{i+k_j} \parallel \widetilde{  \supCN_{k_j}} \parallel supCN'_k\,.
  \end{equation} 

  According to the definition of synchronous product,
  Eq.~\ref{incl} is equivalent to three separate inclusions
  \begin{itemize}
  \item[(i)]
    $P_{i+k_j}(\suptwocCN) \subseteq \supCN_{i+k_j}$
  \item[(ii)]
    $P_{i+k_j}(\suptwocCN) \subseteq (P_{k_j}^{i+k_j})^{-1} \widetilde{  \supCN_{k_j}} $
  \item[(iii)]
    $P_{i+k_j}(\suptwocCN) \subseteq (P_{k}^{i+k_j})^{-1} \supCN'_k $
  \end{itemize}

  The first inclusion is not hard to see. Indeed, from the definitions of conditional controllability and conditional normality, $P_{i+k_j}(\suptwocCN)$ is controllable and normal with respect to $L(G_i)\parallel P_{k_j}(\suptwocCN)$,  $A_{i+k_j,u}$, and $Q_{i+k_j}$. Furthermore, $L(G_i)\parallel P_{k_j}(\suptwocCN)$ is controllable and normal with respect to $L(G_i) \parallel \supCN_{k_j}$, $A_{i+k_j,u}$, and $Q_{i+k_j}$, because $P_{k_j}(\suptwocCN)$ being controllable and normal with respect to $L(G_{k_j})$ is also controllable and normal with respect to the smaller language $\supCN_{k_j} \subseteq L(G_{k_j})$. Therefore, using transitivity of controllability and normality (Lemma~\ref{lem_transC}), $P_{i+k_j}(\suptwocCN)$ is controllable and normal with respect to $L(G_i) \parallel \supCN_{k_j}$, $A_{i+k_j,u}$ and $Q_{i+k_j}$.

  The proof of the other two inclusions is more involved. First, note that (ii) is equivalent to the inclusion $P^{i+k_j}_{k_j}  P_{i+k_j}(\suptwocCN) \subseteq \widetilde{  \supCN_{k_j}}$,
  and that $P_{k_j}(\suptwocCN)=P^{i+k_j}_{k_j}  P_{i+k_j}(\suptwocCN)$.
  Hence, it is equivalent to the inclusion $P_{k_j}(\suptwocCN) \subseteq \widetilde{  \supCN_{k_j}}$. We recall at this point that $\widetilde{  \supCN_{k_j}} =\ \parallel_{i\in I_j} \supCN(P_{k_j}(\supCN_{i+k_j}), L(G_{k_j}), A_{k_j,u},Q_{k_j})$. By the definition of the three-level conditionally controllable and normal languages, $P_{k_j}(\suptwocCN)$ is controllable with respect to $L(G_{k_j})$ and normal with respect to $L(G_{k_j})$ and $Q_{k_j}$. Clearly, $P_{k_j}(\suptwocCN)\subseteq P_{k_j}(K)$. Now, $\supCN(P_{k_j}(\supCN_{i+k_j}), L(G_{k_j}), A_{k_j,u},Q_{k_j})$ is the supremal sublanguage of $P_{k_j}(\supCN_{i+k_j})$, which is controllable and normal with respect to $L(G_{k_j})$ and $Q_{k_j}$. Hence, we obtain that $P_{k_j}(\suptwocCN) \subseteq \widetilde{ \supCN_{k_j}}$ provided $P_{k_j}(\suptwocCN)$ is also a sublanguage of $P_{k_j}(\supCN_{i+k_j})$. Thus, it remains to show that $P_{k_j}(\suptwocCN)\subseteq P_{k_j}(\supCN_{i+k_j})$. 
  However, it holds that $P_{i+k_j}(\suptwocCN)\subseteq \supCN_{i+k_j}$, because $P_{i+k_j}(\suptwocCN)$ is, by definition of the three-level conditionally controllable and normal languages, a sublanguage of $P_{i+k_j}(K)$ that is controllable and normal with respect to $L(G_i) \parallel P_{k_j}(\suptwocCN)$ and $Q_{k_j}$, i.e., it is by transitivity of Lemma~\ref{lem_transC} (and the fact that the synchronous product preserve both controllability and normality for nonconflicting languages) controllable and normal with respect to $L(G_i) \parallel L(G_{k_j})$ and $Q_{k_j}$. Since $\supCN_{k_j}\subseteq L(G_{k_j})$, we obtain that $P_{i+k_j}(\suptwocCN)$ is controllable and normal with respect to $L(G_i) \parallel \supCN_{k_j}$ and $Q_{k_j}$. Therefore, $P_{i+k_j}(\suptwocCN)$ has to be included in $\supCN_{i+k_j}$, which is the supremal sublanguage of $P_{i+k_j}(K)$ that is controllable and normal with respect to  $L(G_i) \parallel \supCN_{k_j}$ and $Q_{k_j}$.

  Finally, inclusion (iii) can be shown using the same arguments as in (ii).
\end{proof}

\section{General Case: A Posteriori Supervisors Combined with Coordinators for Nonblocking}\label{sec:general}

  In the previous section we have shown that a posteriori supervisors enable us to compute maximally permissive supervisors for our three-level coordination control architecture whenever there is no problem with blocking, e.g., in the prefix-closed case. It is clear from Theorem~\ref{aposteriori} that first the a posteriori supervisors on the group coordinator alphabets 
$\widetilde{  \supCN_{k_j}}$ are computed and then the a posteriori supervisor 
$\widetilde{  \supCN_k}$ on the high-level coordinator alphabet is computed. Otherwise stated, the computation of the a posteriori supervisors goes in the bottom-up way. The computation of these supervisors is necessary for obtaining the maximally permissive solution, i.e., the supremal three-level conditionally controllable and conditionally-normal sublanguage of the specification if the sufficient condition of Theorem~\ref{thm:construction} does not hold.

  In the general case, local supervisors $\supCN_{i+k_j}$, $i\in I_j$, for at least one group $I_j$, $j=1,2,\dots ,m$, are conflicting and/or the resulting group supervisors at the higher level are conflicting. This issue can be solved by computing coordinators for nonblockingness that we have presented in~\cite{JDEDS} for the basic coordination control architecture with a single (centralized) coordinator that can now be qualified as the two-level coordination control architecture.

  It appears then natural to combine the bottom-up computation of a posteriori supervisors with the bottom-up computation of coordinators for nonblockingness, which is proposed in this section. First of all, it should be noted that, unlike the prefix-closed case, we do not have a general distributed procedure to compute the supremal conditionally controllable and normal languages. We have shown in~\cite{JDEDS} that, for the two-level coordination control architecture, the maximally permissive solutions for non-prefix-closed languages can be computed in a similar distributed way if the optimal supervisor for the coordinator is included in the optimal local supervisors projected to the coordinator alphabet: $\supC_k \subseteq P_k(\supC_{i+k})$ for all local supervisors $i$. We recall at this point that the opposite inclusion is always true and if the equality $\supC_k \subseteq P_k(\supC_{i+k})$ does not hold, one may still compute local supervisors $\supC_{i+k}$ as described in~\cite{JDEDS}, but the maximal permissiveness cannot be guaranteed.

  Moreover, the typical issue with non-prefix-closed languages is that the local supervisors $\supCN_{i+k_j}$, $i\in I_j$, after the application of the a posteriori group supervisors $\widetilde{ \supCN_{k_j}}$ are conflicting in general, which corresponds to the blocking case. Let us recall that group supervisors for groups $j=1,2,\dots ,m$ are computed as follows, cf. Eq.~\ref{distr_computation}: $\widetilde{ \supCN_{k_j}} =\ \parallel_{i\in I_j} \supCN(P_{k_j}(\supCN_{i+k_j}), L(G_{k_j})$. We propose to apply Theorem~\ref{thm:nonblocking} to all groups $j=1,2,\dots, m$, where $\supCN_{i+k_j} \parallel \widetilde{  \supCN_{k_j}}$, $i\in I_j$, denoted by $\widetilde{ \supCN_{i+k_j}}$, are blocking. Namely, we have to extend the alphabets $A_{k_j}$ so that the observer conditions of Theorem~\ref{thm:nonblocking} are met. Namely, we need to extend the alphabets $A_{k_j}$ so that $P_{k_j}:(A_{i+k_j})^*\to 
( A{k_j})^*$ be $\widetilde{  \supCN_{i+k_j}}$-observer, for all $i \in I_j$.

  The group coordinators for nonblockingness can now be computed as follows
  \begin{align}
  \label{lowlevel_cb}
    C_{k_j} &= \supCN( \parallel_{i\in I_j} P_{k_j}(\widetilde{  \supCN_{i+k_j}}), \\ 
&\parallel_{i\in I_j} 
\overline{P_{k_j}(\widetilde{  \supCN_{i+k_j}})},A_{k_j,u},Q_{k_j}) \nonumber \,.
  \end{align}
  This means that the final nonblocking supervisor for the group $j\in \{1,\dots ,m\}$ is given by $\parallel_{i\in I_j}\widetilde{  \supCN_{i+k_j}}\parallel C_{k_j}$ and we denote it by $N_j$.

  Similarly as within the low-level groups, it may happen that for $K$ that is not prefix-closed, the languages resulting from the group supervisors $N_j\subseteq A_{I_j}$, $j=1,2,\dots ,m$, are conflicting, thus leading to blocking. Then, Theorem~\ref{thm:nonblocking} can be used again. This means that we extend the high-level coordinator alphabet $A_{k}$  so that the observer conditions of Theorem~\ref{thm:nonblocking} is satisfied. A high-level coordinator for nonblockingness is then defined by
  \begin{equation}
  \label{highlevel_cb}
    C_{k} =\supCN( \parallel_{j=1}^m P_{k}(N_j), \parallel_{j=1}^m \overline{P_{k}(N_j)}, A_{k,u},Q_{k})\,,
  \end{equation}
  where $A_k$ is the extension of the original (for safety) high level coordinator such that $P_k:A_{I_j}^*\to (A_{I_j}\cap A_k)^*$, be $N_j$-observer, for all $j=1,\dots ,m$.

  Now we are ready to formally propose the combined approach consisting in the following top-down design of coordinators followed by the bottom-up computations of a posteriori supervisors and coordinators for nonblockingness.

  The combined approach is formalized in Procedure~\ref{proc:combined} below. The organizations of subsystems into a hierarchical structure with low-level groups is assumed to be given.
  \begin{algorithm}
    \floatname{algorithm}{Procedure}
    \caption{The combined approach}
    \label{proc:combined}
    \begin{enumerate}
      \item Extend the shared alphabet $A_{sh}$ to high-level coordinator alphabet $A_k\supseteq A_{sh}$ such that $K =\ \parallel_{r=1}^m P_{I_r + k} (K)$.  
    
      \item Construct the high-level coordinator $G_k=P_k(\parallel_{r=1}^m L_r^{hi})$ and set $L_{k}=L(G_{k})$. 
    
      \item For all low-level groups $I_j$, $j=1,2,\dots ,m$, extend the shared event sets of groups $A_{sh,j}$ to low-level coordinator alphabets $A_{k_j}\supseteq  A_{sh,j}$ so that $P_{I_j+k} (K) =\ \parallel_{i\in I_j} P_{i+k_j} (K)$.
    
      \item Construct the coordinators for low-level groups, that is, $G_{k_j} = \|_{\ell\in I_j} P_{k_j} (G_\ell)$ and set $L_{k_j}=L(G_{k_j})$.
    
      \item Compute the supervisors $\supCN_{k_j} = \supCN(P_{k_j}(K),$ $L(G_{k_j}),A_{k_j,u}, Q_{k_j})$ for group coordinators $L(G_{k_j})$, $j=1,2,\dots ,m$.

      \item Compute supervisors $\supCN_{i+k_j} =\ \supCN(P_{i+k_j}(K),$ $L(G_i) \parallel \supCN_{k_j}, A_{i+k_j,u}, Q_{i+k_j})$ for subsystems $i\in I_j$ and for all groups $I_j$, $j=1,2,\dots ,m$.

      \item Compute the a posteriori supervisors $\widetilde{  \supCN_{k_j}} =\ \cap_{i\in I_j}\ \supCN(P_{k_j}(\supCN_{i+k_j}), L(G_{k_j}), A_{k_j,u},Q_{k_j})$ for all groups $j=1,2,\dots ,m$.

      \item For all groups $j\in \{1,2,\dots,m \}$ such that 
$ \widetilde{\supCN_{i+k_j}}:= \supCN_{i+k_j} \|\widetilde{ \supCN_{k_j}}$, for $i\in I_j$, are conflicting (cf. Eq.~(\ref{supervisor_resulting_low})), compute the group coordinators for nonblockingness using Eq.~(\ref{lowlevel_cb}), that is, $C_{k_j} = \supCN( \parallel_{i\in I_j} P_{k_j}(\widetilde{  \supCN_{i+k_j}}),  \parallel_{i\in I_j} 
\overline{P_{k_j}(\widetilde{  \supCN_{i+k_j}})},A_{k_j,u},Q_{k_j})  $, and set $C_{k_j}=A_{k_j}^*$ for all groups, where $\widetilde{  \supCN_{i+k_j}}$ are not conflicting. Then the language $N_j =\ \parallel_{i\in I_j}\widetilde{  \supCN_{i+k_j}}\parallel C_{k_j}$ is the resulting nonblocking supervisor for the group $j$.
    
      \item Compute the a posteriori supervisor $\widetilde{  \supCN_k}$ at the high-level (cf. Eq.~\ref{aposteriori_high}).
    
      \item If the languages $N_j\parallel \widetilde{  \supCN_k}$ are conflicting, then compute the high-level coordinator for nonblocking $C_k$ using Eq.~(\ref{highlevel_cb}), i.e. $C_{k} =\supCN( \parallel_{j=1}^m P_{k}(N_j), \parallel_{j=1}^m \overline{P_{k}(N_j)}, A_{k,u},Q_{k})$ and set $C_k=A_k^*$ if the languages $N_j\parallel \widetilde{  \supCN_k}$ are not conflicting.
    
      \item Set $N_j\parallel \widetilde{  \supCN_k} \parallel C_k$ as the final closed-loop of the three-level coordination control based on the combined approach. 
    \end{enumerate}
  \end{algorithm}
  
  We have shown in previous sections that, for prefix-closed languages, Procedure~\ref{proc:combined} yields the supremal three-level conditionally controllable and conditionally normal sublanguage of $K$. This cannot be guaranteed in the general case, however, we have a distributive and hierarchical (sometimes referred to as heterarchical) way to compute a safe (although possibly not maximally permissive) and nonblocking supervisor.

  We note that the computational complexity of all steps in Procedure~\ref{proc:combined} is polynomial in fairly small parameters (number of states and events of subsystems combined with coordinators) provided the projection to all coordinator alphabets satisfy the observer condition, in which case there is no problem with possibly an exponential size of the projected generators, and these are guaranteed to be smaller than the non-projected generators.

\section{Concluding remarks}\label{conclusion}
  We proposed a new general approach to coordination control of DES with partial observations. The approach combines the advantages of both the top-down and the bottom-up approaches proposed earlier. It consists in a top-down computation of coordinators (first a high-level coordinator is computed and then the group coordinators are computed) followed by the computation of supervisors at the lowest level (for individual subsystems) and, finally, the a posteriori supervisors and coordinators for nonblockingness are computed in a bottom-up manner.

  The main advantage of the approach is that it combines the main advantage of the top-down approach---the possibility to compute local supervisors only for the individual subsystems---with the generality offered by the bottom-up approach that has namely enabled to leave out the restrictive conditions for being able to compute maximally permissive solutions in a distributed way and to leave out the nonconflictingness assumptions owing to the bottom-up computation of coordinators for nonblockingness. In the near future, we plan to apply the combined approach to discrete-event models of large scale systems stemming from manufacturing and traffic systems. We recall that recently a weaker condition than normality, called relative observability, was proposed for monolithic partially observed DES, cf.~\cite{caiCDC13}. It is possible to introduce a distributed version of relative observability, conditional relative observability~\cite{KomendaMS14a} and use it in our multilevel architecture instead of normality.

\section{Acknowledgments}
  The authors thank S. Lafortune and F. Lin for a fruitful discussion. The research was supported by RVO 67985840, by the Czech Ministry of Education in project MUSIC (grant LH13012), and by the DFG in project DIAMOND (Emmy Noether grant KR~4381/1-1).

\end{document}